	\definecolor{darkred}{rgb}{0.5,0,0}
	\definecolor{darkgreen}{rgb}{0,0.5,0}
	\definecolor{darkblue}{rgb}{0,0,0.5}
\DeclareSymbolFont{cyrillic}{T2A}{cmr}{m}{n}
\DeclareMathSymbol{\Sha}{\mathalpha}{cyrillic}{216}
\newcommand{\stoptocwriting}{%
	\addtocontents{toc}{\protect\setcounter{tocdepth}{-5}}
	}
\newcommand{\resumetocwriting}{%
	\addtocontents{toc}{\protect\setcounter{tocdepth}{\arabic{tocdepth}}}
	}
\newcommand{\leqmode}{\tagsleft@true}
\newcommand{\reqmode}{\tagsleft@false}
\theoremstyle{plain}
\newtheorem{theorem}{Theorem}[section]
\newtheorem*{theorem*}{Theorem}
\newtheorem{proposition}[theorem]{Proposition}
\newtheorem{lemma}[theorem]{Lemma}
\theoremstyle{remark}
\newtheorem{remark}[theorem]{Remark}
\newtheorem*{acknowledgements}{Acknowledgements}
\theoremstyle{definition}
\numberwithin{equation}{section}
\renewcommand{\(}{\left(}
\renewcommand{\)}{\right)}
\newcommand{\ZZ}{\mathbb{Z}}
\newcommand{\QQ}{\mathbb{Q}}
\newcommand{\Zp}{\mathbb{Z}_p}
\newcommand{\Qp}{\mathbb{Q}_p}
\newcommand{\RR}{\mathbb{R}}
\newcommand{\FF}{\mathbb{F}}
\newcommand{\Fp}{\mathbb{F}_p}
\renewcommand{\AA}{\mathbb{A}}
\newcommand{\PP}{\mathbb{P}}
\newcommand{\GG}{\mathbb{G}}
\newcommand{\bfa}{\mathbf{a}} \newcommand{\bfA}{\mathbf{A}}
\newcommand{\bfx}{\mathbf{x}}
\newcommand{\bfalpha}{\boldsymbol{\alpha}}
\newcommand{\sA}{\mathcal{A}}
\newcommand{\sF}{\mathcal{F}}
\newcommand{\sO}{\mathcal{O}}
\DeclareMathOperator{\HH}{H}
\DeclareMathOperator{\Br}{Br}
\DeclareMathOperator{\inv}{inv}
\DeclareMathOperator{\ev}{ev}
\DeclareMathOperator{\Res}{Res}
\DeclareMathOperator{\id}{id}
\DeclareMathOperator{\val3}{v_3}
\DeclareMathOperator{\valp}{v_{\it p}}
\renewcommand{\epsilon}{\varepsilon}
\newcommand{\nequiv}{\not\equiv}
\begin{document}
\onehalfspacing
\title[Quartic del Pezzo surfaces]{Rational points on del Pezzo surfaces of degree four}
\author{Vladimir Mitankin}
	\address{Max Planck Institut f\"{u}r Mathematik \\
		Vivatsgasse 7 \\
		53111 Bonn \\
		Germany
	}
	\email{vmitankin@mpim-bonn.mpg.de}
\author{Cec\'ilia Salgado}
	\address{Instituto de Matem\'{a}tica, UFRJ \\
		Centro de Tecnologia - Bloco C, Cidade Universit\'{a}ria \\
		Athos da Silveira Ramos 149 \\
		Ilha do Fund\~{a}o, Rio de Janeiro, RJ \\
		21941--909 \\
		Brazil and \newline
		Max Planck Institut f\"{u}r Mathematik \\
		Vivatsgasse 7 \\
		53111 Bonn \\
		Germany
	}
	\email{odaglas.ailicec@gmail.com}
\date{\today}
\thanks{2020 {\em Mathematics Subject Classification} 
	 14G12 (primary), 11G35, 11D09, 14D10, 14G05 (secondary).
}

\begin{abstract}
  We study the distribution of the Brauer group and the frequency of the Brauer--Manin obstruction to the Hasse principle and weak approximation in a family of smooth del Pezzo surfaces of degree four over the rationals.
\end{abstract}  
\maketitle
\tableofcontents
\setcounter{tocdepth}{1}

\section{Introduction}
A del Pezzo surface of degree four $X$ over $\QQ$ is a smooth projective surface in $\mathbb{P}^4$ given by the complete intersection of two quadrics defined over $\QQ$. Such surfaces have been the object of study of several papers throughout the last half century. A prominent reason for that is the fact that they provide the simplest example of failure of the Hasse Principle for surfaces. We say that an algebraic variety defined over $\QQ$ \emph{satisfies the Hasse principle} if it has a $\QQ$-point whenever it has a real point and and a point in each field of $p$-adic numbers $\Qp$. The simplest class of surfaces, namely those with Kodaira dimension $-\infty$, is formed by rational and ruled surfaces. The arithmetic of the latter is determined by that of del Pezzo and conic bundle surfaces. Del Pezzo surfaces see their level of arithmetic and geometric complexity increase inversely to their degrees and those of degree at least 5 admitting a rational point are always $\QQ$-rational. Hence quartic del Pezzo surfaces form the first class for which interesting arithmetic phenomena, as for instance failures of the Hasse Principle, can occur. They are the object of study of this paper.

We say that a variety $X$ over $\QQ$ \emph{satisfies weak approximation} if the set of its rational points $X(\QQ)$ is dense in the ad\`eles $X(\bfA_\QQ)$ of $X$, that is $X(\QQ)$ is dense in $\prod_{p \in S} X(\Qp)$ for any finite set $S$ of places of $\QQ$. A conjecture of Colliot-Th\'{e}l\`{e}ne and Sansuc \cite{CTS80} predicts that all failures of the Hasse principle and weak approximation are explained by the Brauer--Manin obstruction. This is a cohomological obstruction developed by Manin \cite{Man74} which exploits the fact that for a smooth, geometrically irreducible variety $X$ over $\QQ$ there is pairing between $X(\bfA_\QQ)$ and the Brauer group $\Br X = \HH_{\text{\'{e}t}}^2(X,\mathbb{G}_m)$ of $X$. Manin showed that $X(\QQ)$ lies inside the left kernel $X(\bfA_\QQ)^{\Br X}$ of this pairing. A \emph{Brauer--Manin obstruction to the Hasse principle} is then present if $X(\bfA_\QQ) \neq \emptyset$ but $X(\bfA_\QQ)^{\Br X} = \emptyset$. On the other hand, there is a \emph{Brauer--Manin obstruction to weak approximation} if $X(\bfA_\QQ)^{\Br X} \neq X(\bfA_\QQ)$. Such  obstructions may occur only if $\Br X / \Br \QQ$ is non-trivial. 

Colliot-Th\'el\`ene and Sansuc's conjecture is established, for a general del Pezzo surface of degree four, under Schinzel’s hypothesis and the finiteness of Tate--Shafarevich groups of elliptic curves by Wittenberg \cite[Thm.~3.36.]{Wit07} when $\Br X = \Br \QQ$ and by V\'arilly-Alvarado and Viray \cite[Thm.~1.5.]{VAV14} when $X$ is of BSD type. The latter corresponds to the complete intersection in $\PP^4$ of the following two quadrics
\[
	\begin{split}
		cx_3 x_4 &= x_2^2 - \varepsilon x_0^2, \\
		(x_3 + x_4)(ax_3 + bx_4) &= x_2^2 - \varepsilon x_1^2,
	\end{split}
\]
where $a, b, c \in \QQ^*$ and $\varepsilon \in \QQ \setminus \QQ^{*2}$ with $(a - b)(a^2 + b^2 + c^2 - ab - ac - bc) \neq 0$ and
$ab, \varepsilon (a^2 + b^2 + c^2 - ab - ac - bc) \notin \QQ^{*2}$.

Quartic del Pezzo surfaces of BSD type with an adelic point always have $\Br X/\Br \QQ \simeq \ZZ/2\ZZ$ . In this setting, Jahnel and Schindler \cite{JS17} have shown that all counter-examples to the Hasse principle form a Zariski dense set in the moduli scheme of all del Pezzo surfaces of degree four.

In this paper we consider a different family of del Pezzo surfaces of degree four given as follows. Let $\bfa = (a_0, \dots, a_4)$ be a quintuple with coprime integer coordinates, we denote the set of such vectors by $\ZZ_{\text{prim}}^5$. Then define $X_\bfa \subset \PP_\QQ^4$ by the complete intersection
\begin{equation}
	\label{eq:dP4 main}
	\begin{split}
		x_0x_1 - x_2x_3 = 0, \\
		a_0x_0^2 + a_1x_1^2 + a_2x_2^2 + a_3x_3^2 + a_4x_4^2 = 0. 
	\end{split}
\end{equation}
We can assume that $a_0, \dots, a_4$ have no factor in common without any loss of generality, otherwise divide the second equation by that factor. Such $X_\bfa$ are smooth if and only if $(a_0a_1 - a_2a_3)\prod_{i = 0}^4 a_i \neq 0$. Here we are interested in the family of all smooth quartic del Pezzo surfaces given by \eqref{eq:dP4 main}, that is
\[	
	\sF = \{X_\bfa \mbox{ as in } \eqref{eq:dP4 main} \ : \ \bfa \in \ZZ_{\text{prim}}^5 \mbox{ and } (a_0a_1 - a_2a_3)\prod_{i = 0}^4 a_i \neq 0\}. 
\]

There are numerous reasons behind our choice of this family. Firstly, different from the BSD type family, the Brauer group does witness a variation as $\bfa$ runs through $\ZZ_{\text{prim}}^5$ which makes interesting the problem of studying the frequency of each possible Brauer group. Secondly, surfaces in $\sF$ admit two distinct conic bundle structures, making their geometry and hence their arithmetic considerably more tractable. Moreover, for such surfaces Colliot-Th\'{e}l\`{e}ne and Salberger have independently shown that the Brauer--Manin obstruction is the only obstruction to the Hasse principle and weak approximation \cite[Thm~2]{CT90}, \cite{Sal86}. In particular, the conjecture of Colliot-Th\'{e}l\`{e}ne and Sansuc holds unconditionally for $X_\bfa$. Finally, our surfaces can be thought of as an analogue of diagonal cubic surfaces as they also satisfy the interesting equivalence of $\QQ$-rationality and trivial Brauer group. This is shown in our forthcoming work.

We order $X_\bfa \in \sF$ with respect to the naive height function $|\bfa| = \max_{0 \le i \le 4}{|a_i|}$. Recall that since $X_\bfa$ is projective we have $X_{\bfa}(\bfA_\QQ) = \prod_{p \le \infty} X_{\bfa}(\Qp)$, where we have used $\QQ_\infty$ to denote the reals. The coprimality condition of $\ZZ_{\text{prim}}^5$ implies that the number of $X \in \sF$ of height at most $B$ is asymptotically $32B^5/\zeta(5)$, where $\zeta(s)$ is the Riemann zeta function. Our first result shows that a positive proportion of $X_\bfa \in \sF$ have points everywhere locally, i.e. $X_\bfa(\bfA_\QQ) \neq \emptyset$.

\begin{theorem}
	\label{thm:local solubility}
	We have
	\[
		\lim_{B \rightarrow \infty}\frac{\#\left\{X_\bfa \in \sF \ : \ |\bfa| \le B \mbox{ and } X_\bfa(\bfA_\QQ) \neq \emptyset \right\}}{32B^5/\zeta(5)} = \sigma_\infty \prod_p \sigma_p > 0,
	\]
	where $\sigma_p$, $\sigma_\infty$ are local densities whose values are given in Proposition~\ref{prop:local densities}.
\end{theorem}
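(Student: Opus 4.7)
Since $X_\bfa$ is projective, the adelic set factorises as $X_\bfa(\bfA_\QQ) = \prod_v X_\bfa(\QQ_v)$, so the condition $X_\bfa(\bfA_\QQ) \neq \emptyset$ is a conjunction of local conditions at every place. The plan is to first define, for each finite prime $p$, the $p$-adic density $\sigma_p$ of $\{\bfa \in \Zp^5 : X_\bfa(\QQ_p) \neq \emptyset\}$ with respect to the normalised Haar measure on $\Zp^5$, together with a corresponding normalised real density $\sigma_\infty$ associated with the box $[-1,1]^5 \subset \RR^5$. Hensel's lemma ensures that $X_\bfa(\QQ_p) \neq \emptyset$ is determined by $\bfa$ modulo a suitable power $p^{N_p}$, which in particular yields measurability.

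Next I would apply a sieve of Ekedahl type. For a finite set $S$ of primes the local conditions at $\{\infty\} \cup S$ are cut out by a union of boxes modulo $\prod_{p \in S} p^{N_p}$, so a direct lattice-point count yields
\[
\#\{\bfa \in \ZZ^5 : |\bfa| \le B,\ X_\bfa(\QQ_v) \neq \emptyset \text{ for } v \in \{\infty\}\cup S\} = (2B)^5 \sigma_\infty \prod_{p \in S}\sigma_p + O_S(B^4),
\]
and the primitivity condition is enforced by M\"obius inversion, producing the factor $\prod_p(1-p^{-5}) = 1/\zeta(5)$. To pass from $S$ to all primes I would invoke Lang--Weil together with Hensel's lemma: whenever $p$ is sufficiently large and $p \nmid (a_0 a_1 - a_2 a_3)\prod_i a_i$, the surface $X_\bfa$ has smooth reduction at $p$ with a point in $\FF_p$, hence $X_\bfa(\QQ_p) \neq \emptyset$. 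Consequently any $\bfa \in \ZZ^5_{\text{prim}}$ with $|\bfa| \le B$ that fails local solubility at some $p \notin S$ must satisfy $p \mid (a_0 a_1 - a_2 a_3)\prod_i a_i$, and elementary divisor estimates of the form $\sum_{p > T}\#\{\bfa : |\bfa|\le B,\ p \mid (a_0 a_1 - a_2 a_3)\prod_i a_i \} = o(B^5)$ as $T \to \infty$ give the required uniform tail bound.

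For positivity, I would choose signs for $a_0,\dots,a_4$ so that the second quadric is indefinite and its real zero set meets the quadric $x_0 x_1 = x_2 x_3$ transversely; this produces $X_\bfa(\RR) \neq \emptyset$, and continuity yields a whole real neighbourhood, so $\sigma_\infty > 0$. At each finite $p$, an analogous Hensel lift of a smooth $\FF_p$-point on a suitably chosen $X_\bfa$, combined with $p$-adic continuity, shows $\sigma_p > 0$. Convergence of $\prod_p \sigma_p$ then follows from the estimate $1 - \sigma_p = O(p^{-1-\delta})$ extracted from the same Lang--Weil analysis, and positivity of the Euler product $\sigma_\infty \prod_p \sigma_p$ is immediate.

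The main obstacle I anticipate is precisely the uniform tail estimate, that is, controlling both $\sum_p (1-\sigma_p)$ and the number of $\bfa$'s of height at most $B$ failing local solubility at primes outside an expanding finite $S$. This reduces to a precise geometric description of the loci in $\bfa$-space where $X_\bfa$ either has singular reduction at $p$ or is smooth at $p$ but fails to carry a $\QQ_p$-point; since $\sF$ consists of complete intersections of two quadrics rather than single hypersurfaces, this analysis is more intricate than for a standard diagonal family. The two conic bundle structures on $X_\bfa$ highlighted in the introduction should help, as they reduce local solubility at $p$ to solubility of a conic over $\Qp$, for which explicit Hilbert symbol criteria are available.
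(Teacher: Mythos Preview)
Your overall strategy (local densities, Ekedahl-type sieve, Lang--Weil for the tail) is the right framework, but the displayed tail estimate is false as written. The count $\#\{\bfa : |\bfa|\le B,\ p \mid (a_0a_1-a_2a_3)\prod_i a_i\}$ is of order $B^5/p$, so
\[
\sum_{p>T}\#\{\bfa : |\bfa|\le B,\ p \mid (a_0a_1-a_2a_3)\textstyle\prod_i a_i\}
\ \gg\ B^5\sum_{T<p\ll B}\frac{1}{p},
\]
and the harmonic sum over primes diverges; this is not $o(B^5)$ uniformly in $B$. Knowing only that $p$ divides the discriminant locus is a codimension-one condition, and codimension one is never enough for an Ekedahl tail. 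What you actually need is that the density of $\bfa$ with $X_\bfa(\QQ_p)=\emptyset$ is $O(1/p^2)$: among those $\bfa$ with $p\mid(\text{disc})$, only a further $O(1/p)$ proportion fail to have a $\QQ_p$-point. Your Lang--Weil argument, applied only to smooth reductions, cannot see this; one must also control the mildly singular reductions. The same issue invalidates your claim that $1-\sigma_p=O(p^{-1-\delta})$ follows ``from the same Lang--Weil analysis''; in fact Proposition~\ref{prop:local densities} gives $1-\sigma_p=\tfrac{1}{2}p^{-2}+O(p^{-3})$, and the exponent $2$ is exactly what makes the product converge.

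The paper bypasses all of this by invoking \cite[Thm.~1.3]{BBL16} as a black box. That theorem packages the sieve and the tail bound together, and its sole geometric hypothesis is that the fibre of $f:\sF\to\PP^4$ over every codimension-one point of $\PP^4$ is \emph{split}, i.e.\ contains a geometrically integral open subscheme. The paper verifies this by a short case analysis of the six components of $\{a_0a_1a_2a_3a_4(a_0a_1-a_2a_3)=0\}$: e.g.\ when $a_0=0$ the fibre is birational to a smooth quadric surface, and when $a_4=0$ or $a_0a_1-a_2a_3=0$ the conic-bundle presentation \eqref{eqn:conic bundle} exhibits the generic fibre as an irreducible (hence geometrically integral, being a conic bundle) surface. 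This split-fibre condition is precisely what guarantees, via Lang--Weil applied now to the \emph{degenerate} fibre, that bad reduction at $p$ still produces $\QQ_p$-points for all but an $O(1/p^2)$-density of $\bfa$. Your instinct to use the conic bundles is therefore correct, but the efficient route is to feed them into the splitness check rather than to rebuild the sieve by hand.
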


A first natural step towards understanding the frequency of failures of the Hasse principle and weak approximation for $X_\bfa \in \sF$ is to understand how often $\Br X_{\bfa} / \Br \QQ$ is non-trivial. It is well-understood for quartic del Pezzo surfaces that when non-trivial this quotient is either $\ZZ / 2\ZZ$ or $(\ZZ/2\ZZ)^2$ \cite{Man74}, \cite{SD93}. To analyse this for any real $B \ge 1$ let
\[
	N_{\# \sA}(B) 
	= \#\{ X_{\bfa} \in \sF \ : \ |\bfa| \le B, X_\bfa(\bfA_\QQ) \neq \emptyset \mbox{ and } \Br X_{\bfa} / \Br \QQ \simeq \sA\},
\]
where $\sA$ is either the trivial group, $\ZZ/2\ZZ$ or $(\ZZ/2\ZZ)^2$. Our next result shows that $\Br X_\bfa / \Br \QQ$ is almost always of order two.

\begin{theorem} 
	\label{thm:Br}
	We have
	\[
		\begin{split}
			B^3 &\ll N_1(B) \ll B^3 (\log B)^4, \\
			N_2(B) &\sim \frac{32}{\zeta(5)}\(\sigma_\infty \prod_p \sigma_p \)B^5, \\
			N_4(B) &= \frac{60}{\pi^2}B^3 + O(B^{5/2}(\log B)^2),
		\end{split}
	\]
	as $B$ goes to infinity.
\end{theorem}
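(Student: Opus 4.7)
The plan is first to establish an explicit arithmetic characterization of $\Br X_\bfa / \Br \QQ$ in terms of $\bfa$, and then to count each of the three possible values of this quotient.

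The surface $X_\bfa$ admits two conic bundle fibrations $\pi_1, \pi_2 \colon X_\bfa \to \PP^1$ arising from the two rulings of the rank-$4$ quadric $\{x_0 x_1 = x_2 x_3\} \cong \PP^1 \times \PP^1$. Parameterising the first ruling by $(x_0, x_1, x_2, x_3) = (su, tv, sv, tu)$ with base coordinate $(s:t)$, the fibre of $\pi_1$ is a ternary conic in $(u:v:x_4)$ of the form $A(s,t) u^2 + B(s,t) v^2 + a_4 x_4^2 = 0$, where $A, B$ are binary quadratic forms with coefficients among the $a_i$; this produces a quaternion Brauer class $\alpha_1 \in \Br X_\bfa$, and $\pi_2$ yields an analogous class $\alpha_2$. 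A Hochschild--Serre argument combined with the fact, recalled in the introduction, that $\Br X / \Br \QQ$ has exponent dividing $2$ and $\FF_2$-rank at most $2$ for quartic del Pezzo surfaces, shows that $\alpha_1, \alpha_2$ generate $\Br X_\bfa / \Br \QQ$ and that the vanishing of each $\alpha_i$ translates into an explicit square-class condition on certain products of the $a_j$. This stratifies $\sF$ into $\sF_1 \sqcup \sF_2 \sqcup \sF_4$ indexed by the order of the Brauer quotient.

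For $N_4(B)$, the condition $\Br X_\bfa / \Br \QQ \cong (\ZZ/2\ZZ)^2$ requires $\alpha_1$ and $\alpha_2$ to be independently non-trivial, imposing two stringent squareness relations that cut out a thin subset of $\ZZ_{\text{prim}}^5$. After encoding these relations by auxiliary integer parameters and applying M\"obius inversion to handle the primitivity, the resulting lattice-point count produces the leading term $\tfrac{60}{\pi^2} B^3$ together with error $O(B^{5/2}(\log B)^2)$ coming from square-root cancellation in the innermost summation; local solubility on this stratum is either automatic or is absorbed into the error. For $N_1(B)$, the upper bound $N_1(B) \ll B^3 (\log B)^4$ follows from an iterated application of the elementary estimate $\#\{(a,b) \in [-B,B]^2 : ab \in \QQ^{*2}\} \ll B \log B$, one such inequality for each of the squareness conditions needed to trivialise both $\alpha_i$, combined with trivial bounds on the remaining coordinates and the primitivity sieve. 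For the lower bound $N_1(B) \gg B^3$, one exhibits an explicit positive-density sub-family of $\sF_1$ for which both Brauer classes are killed by specific algebraic identities among the $a_i$, and for which local solubility is ensured at the bad primes by sign and residue considerations.

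Finally, $N_2(B)$ is obtained by subtraction: since $N_1(B) + N_2(B) + N_4(B)$ is precisely the count of locally soluble $X_\bfa$ with $|\bfa| \le B$, which by Theorem~\ref{thm:local solubility} is asymptotically $\tfrac{32}{\zeta(5)}(\sigma_\infty \prod_p \sigma_p) B^5$, and since $N_1(B) + N_4(B) = O(B^3 (\log B)^4)$ is of strictly smaller order, we conclude $N_2(B) \sim \tfrac{32}{\zeta(5)}(\sigma_\infty \prod_p \sigma_p) B^5$. The main obstacle throughout is the first step: extracting from the Galois-cohomological analysis a characterisation of each $\alpha_i$ that is simultaneously arithmetically transparent, so that the subsequent sieves and lattice counts go through, and sharp enough to yield the exact constant $\tfrac{60}{\pi^2}$ for $N_4(B)$ together with its power-saving error term.
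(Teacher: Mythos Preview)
Your overall architecture---characterise the Brauer quotient arithmetically, count $N_4$ and $N_1$ directly, and recover $N_2$ by subtraction from Theorem~\ref{thm:local solubility}---matches the paper, and your treatment of $N_2(B)$ and the broad strategy for $N_4(B)$ are fine. However, there is a genuine gap in your plan for the upper bound $N_1(B) \ll B^3(\log B)^4$.

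The problem is that the squareness conditions characterising trivial $\Br X_\bfa/\Br\QQ$ are not monomial in the $a_i$: they involve $d = a_0a_1 - a_2a_3$. Concretely, the relevant condition is that one of $-a_0a_4d$, $-a_1a_4d$, $a_2a_4d$, $a_3a_4d$ lies in $\QQ^{*2}$ (this is what Proposition~\ref{prop:BrXconic} gives, and any correct analysis via either conic bundle will produce the same $d$ in the residues). Your proposed tool, iterating the elementary estimate $\#\{(a,b)\in[-B,B]^2 : ab\in\QQ^{*2}\}\ll B\log B$, only handles conditions of the form ``a product of two coordinates is a square''; it cannot touch a constraint like $-a_0a_4(a_0a_1-a_2a_3)=\square$, which couples all five coordinates through a genuinely additive relation. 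The paper handles this by writing out the square-free kernels, reducing the square condition to a linear equation $b_2b_3 - a_1v_0^2s - v_5^2w = 0$ in three of the variables, dyadically decomposing all parameters, and then invoking Heath-Brown's lattice-point bound \cite[Lem.~3]{HB84} for primitive integer solutions to $\bfalpha\cdot\bfx=0$. This step is the actual source of the exponent $3$ and the $(\log B)^4$, and nothing in your proposal substitutes for it.

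A smaller point: for the lower bound $N_1(B)\gg B^3$, ``explicit positive-density sub-family'' undersells what is needed. The paper specialises to $a_4=-a_0$ and $d\in\QQ^{*2}$ (which forces $-a_0a_4d=a_0^2d\in\QQ^{*2}$ and supplies an obvious rational point), and then the count of $(a_0,\dots,a_3)$ with $a_0a_1-a_2a_3=k^2$ is handled via the Duke--Rudnick--Sarnak asymptotic for $2\times 2$ integer matrices of fixed determinant. You will need some comparable input; the count is not elementary.
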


Theorem~\ref{thm:Br} implies that there are infinitely many $X_\bfa$ with $\Br X_{\bfa} / \Br \QQ$ of order four. However, Remark~\ref{rem:Br4} shows that such surfaces always have a rational point and thus all failures of the Hasse principle in $\sF$ arise when $\Br X_\bfa/\Br \QQ = \ZZ/2\ZZ$. Our next result provides an upper bound for the number of such failures and shows that they appear quite rarely in the family $\sF$.

\begin{theorem}
	\label{thm:BMO}
	We have
	\[
		\#\{X_\bfa \in \sF \ : \ |\bfa| \le B, X_\bfa(\bfA_\QQ) \neq \emptyset \mbox{ but } X_\bfa(\QQ) = \emptyset \}
		\ll B^{9/2},
	\]
	as $B$ goes to infinity.
\end{theorem}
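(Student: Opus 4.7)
The plan combines a reduction to the case $\Br X_\bfa/\Br\QQ \simeq \ZZ/2\ZZ$ with an explicit Brauer--Manin computation and a sieve-type count, using crucially the theorem of Colliot-Th\'{e}l\`{e}ne \cite{CT90} and Salberger \cite{Sal86} that the Brauer--Manin obstruction is the only obstruction to the Hasse principle for $X_\bfa \in \sF$.

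First, I would reduce to the case $\Br X_\bfa/\Br\QQ \simeq \ZZ/2\ZZ$. By Theorem~\ref{thm:Br}, the count of $X_\bfa$ with $|\bfa| \le B$ and $\Br X_\bfa/\Br\QQ$ trivial or of order four is $O(B^3(\log B)^4)$, well within the claimed bound. Moreover such surfaces always satisfy the Hasse principle: in the trivial case the absence of any Brauer--Manin obstruction produces a rational point from any adelic point via \cite{CT90,Sal86}; in the order-four case Remark~\ref{rem:Br4} supplies a rational point directly. Hence it suffices to bound Hasse-principle failures among those $X_\bfa$ with $\Br X_\bfa/\Br\QQ \simeq \ZZ/2\ZZ$.

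Next, I would make a generator $\sA_\bfa \in \Br X_\bfa$ of $\Br X_\bfa/\Br\QQ$ explicit via one of the two conic bundles on $X_\bfa$. Parametrising the quadric $x_0 x_1 = x_2 x_3$ by $x_0 = su,\ x_1 = tv,\ x_2 = sv,\ x_3 = tu$ and projecting to $(u:v) \in \PP^1$ exhibits $X_\bfa$ as a conic bundle with four singular fibres above the zeros of $(a_0 u^2 + a_2 v^2)(a_3 u^2 + a_1 v^2)$. The residues of $\sA_\bfa$ may be arranged to be supported on two of these fibres, yielding a quaternion algebra whose slots are polynomial in the $a_i$ and the coordinates. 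A purity argument then shows that $\ev_p \sA_\bfa$ vanishes identically on $X_\bfa(\Qp)$ at every prime $p$ not dividing the discriminant polynomial $\Delta(\bfa) := a_0 a_1 a_2 a_3 a_4 (a_0 a_1 - a_2 a_3)$.

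Since $\Br X_\bfa/\Br\QQ \simeq \ZZ/2\ZZ$, a Hasse-principle failure is equivalent to the map $(x_v) \mapsto \sum_v \inv_v \sA_\bfa(x_v)$ being identically $1/2$ on $X_\bfa(\bfA_\QQ)$. Because the target $\tfrac{1}{2}\ZZ/\ZZ$ has only two elements, this in turn forces each local evaluation $\ev_v \sA_\bfa$ to be constant on $X_\bfa(\QQ_v)$ at every bad place $v$, with the sum of these constants equal to $1/2$. In particular, there is at least one place $v_0$ at which $\ev_{v_0} \sA_\bfa \equiv 1/2$. Spelling this condition out via Hilbert symbols translates into a linked system of congruence and quadratic-residue conditions on $\bfa$ at the corresponding prime $p_0$. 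Summing the resulting count of $\bfa$ satisfying these conditions over $p_0 \mid \Delta(\bfa)$, handled via divisor bounds for large primes and a direct estimate for small primes, and treating $p_0 = 2$ and the archimedean place separately, should yield the claimed $O(B^{9/2})$.

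The main obstacle is to show that the constancy condition $\ev_{p_0} \sA_\bfa \equiv 1/2$ imposes enough constraints to save the full $B^{1/2}$ over the trivial bound $B^5$: the mere divisibility $p_0 \mid \Delta(\bfa)$ is too weak, giving at best $B^5 \log\log B$ after summation over $p_0$. Hence the technical heart of the argument is a careful Hilbert-symbol unpacking at each bad prime, in the spirit of the constant-invariant counts of Bright--Browning--Loughran, which is needed both to identify precisely the congruence conditions imposed and to sum them efficiently. A secondary difficulty is clean treatment of primes $p_0$ dividing several $a_i$ simultaneously, where one must avoid double-counting in the sieve.
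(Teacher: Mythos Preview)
Your reduction to $\Br X_\bfa/\Br\QQ\simeq\ZZ/2\ZZ$ and the use of an explicit quaternion generator via the conic bundle both match the paper. The gap lies in the counting strategy that follows.

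You propose to exploit the \emph{existential} consequence of a Brauer--Manin obstruction: some bad place $p_0$ has $\ev_{p_0}\sA_\bfa\equiv 1/2$, then sum over such $p_0\mid\Delta(\bfa)$. As you yourself flag, a single-prime constancy condition of this type imposes a congruence of density at best $O(1/p_0)$ or $O(1/p_0^2)$ on $\bfa$, and summing $B^5/p_0^2$ over primes yields only $O(B^5)$, not a power saving. The paper explicitly notes that the general constant-invariant machinery of Bright--Browning--Loughran gives only $O(B^5/\log B)$ here; your outline does not supply the missing idea that improves this to $B^{9/2}$.

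What the paper does instead is exploit the \emph{universal} constancy condition, and crucially the special role of $a_4$ in the second slot of the generator $\alpha=(a_0(x_0/x_2)^2+a_2,\,-a_0a_4d)$. For any prime $p>7$ dividing $a_4$ to an odd power and \emph{not} dividing $a_0a_1a_2a_3d$, an elementary lemma over $\Fp$ produces $\Qp$-points realising both values of the Hilbert symbol, so $\ev_p\alpha$ is non-constant. Hence a Hasse-principle failure forces \emph{every} such prime $p$ to divide $a_0a_1a_2a_3d$. This is a global factorisation constraint on $a_4$: writing its square-free part coprime to $210\,a_0a_1a_2a_3$ as $s$, one must have $s\mid d$. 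Parametrising $a_4=2^{v_2}3^{v_3}5^{v_5}7^{v_7}k\ell mn\,rs\,b_4^2$ with $k,\ell,m,n$ the gcds with $a_0,\dots,a_3$, the congruence $d\equiv 0\pmod s$ cuts the $a_3$-count by a factor $s$, and the presence of $b_4^2$ in $a_4$ saves $B^{1/2}$; an elementary summation then yields $\ll B^{9/2}$. The point you are missing is precisely this pivot from ``some bad prime is constant'' to ``every odd-valuation prime of $a_4$ must divide the rest of the discriminant'', which converts a density condition at one prime into a structural constraint on the whole factorisation of $a_4$.
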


Theorem~\ref{thm:BMO} together with Theorem~\ref{thm:local solubility} give us a better understanding of how often varieties in families have a rational point. This question has raised a significant interest lately with studies by numerous authors \cite{Bha14}, \cite{BBL16}, \cite{BB14} \cite{Lou18}, \cite{LS16}, \cite{Ser90}, \cite{Sof16}. An answer to it in complete generality seems out of reach with current techniques which makes results as in Theorem~\ref{thm:BMO} especially valuable.

The proof of Theorem~\ref{thm:BMO} yields that $100\%$ of the surfaces in $\sF$ satisfy the Hasse principle but yet fail weak approximation. This is made precise in the next theorem.

\begin{theorem}
	\label{thm:WA}
	We have
	\[
		\#\{X_\bfa \in \sF \ : \ |\bfa| \le B, X_\bfa \mbox{ satisfies weak approximation}\}
		\ll B^{9/2},
	\]
	as $B$ goes to infinity.
\end{theorem}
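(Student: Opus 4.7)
The plan is to focus on the generic case $\Br X_\bfa/\Br \QQ \simeq \ZZ/2\ZZ$ and show that within it, the constancy of local Brauer evaluations, which is necessary for weak approximation, forces an algebraic squareness condition on the $a_i$'s. By Theorem~\ref{thm:Br}, the contribution to the count from $X_\bfa$ with $\Br X_\bfa/\Br \QQ$ either trivial or of order four is $O(B^3 (\log B)^4)$, which is already absorbed by $O(B^{9/2})$. So only the case $\Br X_\bfa/\Br \QQ \simeq \ZZ/2\ZZ$ needs genuine attention.

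In this case, since by \cite{CT90}, \cite{Sal86} the Brauer--Manin obstruction is the only obstruction to weak approximation, $X_\bfa$ satisfies weak approximation if and only if the unique non-trivial class $\sA \in \Br X_\bfa/\Br \QQ$ has \emph{constant} local evaluation $\ev_{\sA,v}\colon X_\bfa(\QQ_v) \to \tfrac{1}{2}\ZZ/\ZZ$ at every place $v$ of $\QQ$. I would use the conic bundle structure induced by the rank-four quadric $x_0 x_1 = x_2 x_3$ to represent $\sA$ explicitly as a quaternion algebra whose ramification data involves only simple polynomial expressions in $a_0,\ldots,a_4$; such a representative should already have appeared in the proofs of Theorems~\ref{thm:Br} and~\ref{thm:BMO}.

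Next I would analyse, at each ramified place $p$ of $\sA$, the conditions on $\bfa \pmod{p^k}$ under which $\ev_{\sA,p}$ is constant. At primes of good reduction for the ambient model constancy is automatic; at the remaining places, constancy forces certain Hilbert symbols to vanish, which translates into explicit congruences on the $a_i$'s. Piecing these congruences together over all ramified primes via a Hasse-type global-versus-local argument, I expect constancy at every place to force an identity of the shape ``a specific non-trivial polynomial in the $a_i$'s is a rational square'', with natural candidates (suggested by the shape of $X_\bfa$) being $a_4$, $a_0 a_1 - a_2 a_3$, or a product of some of the $a_i$'s. The number of $\bfa \in \ZZ_{\text{prim}}^5$ with $|\bfa| \le B$ for which a fixed polynomial of degree one or two in the $a_i$'s is a rational square is $O(B^{9/2})$: one coordinate (or a polynomial combination thereof) varies over an $O(B^{1/2})$ set of perfect squares while the remaining four vary freely up to $B$.

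The main obstacle will be establishing rigorously that constancy of $\ev_{\sA,p}$ at every single ramified prime actually forces the global squareness condition, rather than merely its local counterparts. This requires a careful comparison of Hilbert symbol computations across primes, in the spirit of the sieve underpinning the proof of Theorem~\ref{thm:BMO}, but refined to extract the stronger constraint that \emph{every} local deviation of the evaluation must vanish, not merely that their sum can be made zero by some suitable choice of adele. Once the global squareness condition has been isolated, the lattice-point count delivering the exponent $9/2$ is routine and mirrors the corresponding bookkeeping already performed for Theorem~\ref{thm:BMO}.
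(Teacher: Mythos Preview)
Your reduction to the case $\Br X_\bfa/\Br\QQ\simeq\ZZ/2\ZZ$ and your intention to use the explicit quaternion representative coming from the conic bundle are both correct and match the paper. The divergence, and the gap, is in how you plan to extract the constraint on $\bfa$ from constancy of the local evaluations.

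You propose to assemble the local constancy conditions at all ramified primes into a global squareness condition via a Hasse-type argument, and you flag this as the main obstacle. In fact no such global argument is needed, and no clean squareness condition emerges. The paper instead works at a \emph{single} prime: if $p>7$ divides $a_4$ to an odd power and $p\nmid a_0a_1a_2a_3d$, then a direct computation over $\Fp$ (Lemma~\ref{lem:surjectivity}) shows that $\inv_p\circ\ev_\alpha$ is surjective onto $\{0,\tfrac12\}$, so weak approximation fails. Hence weak approximation can hold only when every such prime factor of $a_4$ also divides $a_0a_1a_2a_3d$; writing out the shared factors of $a_4$ with each $a_i$ and isolating the residual square-free part $s$, this becomes the congruence $d\equiv 0\bmod s$, not a squareness identity. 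The $B^{9/2}$ bound then comes from summing over this congruence together with the square part $b_4^2$ of $a_4$, exactly as in the proof of Theorem~\ref{thm:BMO}.

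So your anticipated difficulty---turning local constancy into a global square---is a red herring: the argument stays local, and the constraint you should be aiming for is a divisibility condition on the square-free part of $a_4$ relative to $d$, not a polynomial in the $a_i$ being a square. Your final counting heuristic (one variable restricted to $O(B^{1/2})$ values) does survive, but it is the square part of $a_4$ that supplies the saving, not a globally forced square.
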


It follows from Theorem~\ref{thm:Br} that the quantity in Theorem~\ref{thm:WA} is $\gg B^3$, since rational surfaces satisfy weak approximation. There are general methods for proving results as in Theorems~\ref{thm:BMO} and \ref{thm:WA} developed in \cite{BBL16} and \cite{Bri18}. However, these methods would yield a bound of the shape $O(B^5/(\log B))$. While our idea closely resembles the one used in \cite{BBL16} and \cite{Bri18}, the explicit description of certain Brauer group elements here allows us to get a power saving in the upper bounds obtained in Theorems~\ref{thm:BMO} and \ref{thm:WA}. Thus Theorem~\ref{thm:BMO} and \ref{thm:WA} do not follow from the general tools.

This paper is organised as follows. In Section~\ref{sec:Brauer} we describe explicitly the two conic bundle structures on $X_\bfa$ and use them to compute $\Br X_\bfa / \Br \QQ$. Section~\ref{sec:local} is dedicated to the study of the local points and the local densities $\sigma_p$ and $\sigma_\infty$. In Section~\ref{sec:thm local} we prove Theorem~\ref{thm:local solubility}. Sections~\ref{sec:br4} and \ref{sec:thm Br} are dedicated to the proof of Theorem~\ref{thm:Br}. The proofs of Theorems~\ref{thm:BMO} and \ref{thm:WA} are contained in Section~\ref{sec:bmo}.

\stoptocwriting
\subsection*{Notation}
Throughout this paper we set $d = a_0a_1 - a_2a_3$. For a field $k$ and a variety $X$ over $k$ we use $k(X)$ for the function field of $X$.
\resumetocwriting

\begin{acknowledgements}
	We would like to thank Tim Browning for many useful comments and for his suggestion on how to improve the upper bound in Proposition~\ref{prop:N_1}. We also thank Martin Bright, Dan Loughran, Yuri Manin and Bianca Viray for many useful discussions. We thank Pieter Moree for a helpful comment on the analytic part of the paper. We are grateful to the Max Planck Institute for Mathematics in Bonn and the Federal University of Rio de Janeiro for their hospitality while working on this article. Cec\'ilia Salgado was partially supported by FAPERJ grant E-26/203.205/2016,  the Serrapilheira Institute (grant Serra-1709-17759), Cnpq grant PQ2 310070/2017-1 and the Capes-Humboldt program.
\end{acknowledgements}

\section{Description of the Brauer group}
\label{sec:Brauer}
This section is dedicated to the study of $\Br X_\bfa / \Br \QQ$. We shall give a list of explicit representatives in $\Br X_\bfa$ of the elements generating $\Br X_\bfa / \Br \QQ$ in the case of cyclic group. This will later allow us to obtain the upper bounds in Theorems~\ref{thm:BMO} and \ref{thm:WA}. We follow a classical approach for computing the Brauer group of a conic bundle surface. We begin by embedding $X_\bfa$ in the scroll $\FF(1, 1, 0)$ following \cite[\S2]{Rei97}. A short summary of this is contained in \cite[\S2]{FLS18}. Our method closely follows \cite[\S3]{LM18}.

As explained in \cite[Ch.~2]{Bro09} if one of the quadrics defining $X_\bfa$ is of the shape $x_0x_1 - x_2x_3 = 0$, then there is a pair of morphisms $\pi_1: X_\bfa \rightarrow \PP^1$ and $\pi_2 : X_\bfa \rightarrow \PP^1$ defined over $\QQ$ each of which endows $X_\bfa$ with a different conic bundle structure. This can be seen in the following way. The map
\[
	\begin{split}
		\FF(1, 1, 0) &\rightarrow \PP^4, \\
		(s, t; x, y, z) &\mapsto (sx: ty: tx: sy: z)
	\end{split}
\]
defines an isomorphism between $X_\bfa$ and
\begin{equation}
	\label{eqn:conic bundle}
	(a_0s^2 + a_2t^2)x^2 + (a_3s^2 + a_1t^2)y^2 + a_4z^2 = 0 \subset \FF(1, 1, 0).
\end{equation}
One can view $\FF(1, 1, 0) = \PP(\sO_{\PP^1}(1)\oplus\sO_{\PP^1}(1)\oplus\sO_{\PP^1})$ as $((\AA^2 \setminus 0) \times (\AA^3 \setminus 0))/ \GG_m^2$, where the action of $\GG_m^2$ on $(\AA^2 \setminus 0) \times (\AA^3 \setminus 0)$ is described by
\[
	(\lambda, \mu) \cdot (s, t; x, y, z) = (\lambda s, \lambda t; \frac{\mu}{\lambda}x, \frac{\mu}{\lambda} y, \mu z).
\]
Then $\pi_1 : X_\bfa \rightarrow \PP^1$ is obtained by projecting to $(s, t)$. It is now clear that each fibre of $\pi_1$ is a conic and thus $X_\bfa$ is a conic bundle over the projective line. Similarly, one obtains $\pi_2 : X_\bfa \rightarrow \PP^1$ via the map $(s, t; x, y, z) \mapsto (tx: sy: ty: sx: z)$.

It follows from \eqref{eqn:conic bundle} that the conic associated to the generic fibre of $\pi_1$ takes the shape $-a_4(a_0s^2 + a_2t^2)x^2 - a_4(a_3s^2 + a_1t^2)y^2 - z^2 = 0$. There is an associated to it quaternion algebra $Q$ in the Brauer group of the function field of $\PP^1$ given by
\begin{equation}
	\label{def:gamma}
	Q = (-a_4(a_0(s/t)^2 + a_2), - a_4(a_3(s/t)^2 + a_1)).
\end{equation}
The quaternion algebra $Q$ has a trivial residue at any closed point of $\PP^1$ corresponding to a non-singular fibre of $\pi_1$. Its residues over the singular fibres of $\pi_1$ are described in the next lemma.

\begin{lemma} 
	\label{lem:conic_bundle}
	The following holds.
	\begin{enumerate}[label=\emph{(\roman*)}]
		\item The map $\pi_1: X_\bfa \to \PP^1$ has $4$ singular geometric fibres. 
		\item The bad fibres lie over the zero locus of 
		\[
			\Delta(s,t) = (a_0s^2 + a_2t^2)(a_3s^2 + a_1t^2).
		\]
		\item Assume that $-a_0a_2, -a_1a_3 \notin \QQ^{*2}$. Let $T'$, $T''$ be the closed points corresponding the zero locus of $a_0s^2 + a_2t^2$ and $a_3s^2 + a_1t^2$, respectively. They have residue fields $\QQ(T') = \QQ(\sqrt{-a_0a_2})$ and $\QQ(T'') = \QQ(\sqrt{-a_1a_3})$. The fibres over $T', T''$ have the following residues:
		\[
			\begin{split}
				\Res_{T'}(Q) &= -a_0a_4d \in \QQ(T')^*/\QQ(T')^{*2}, \\
				\Res_{T''}(Q) &= -a_1a_4d \in \QQ(T'')^*/\QQ(T'')^{*2}.
			\end{split}
		\]
	\end{enumerate}
\end{lemma}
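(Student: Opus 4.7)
The plan is to work directly with the conic bundle presentation in \eqref{eqn:conic bundle}. For parts (i) and (ii), a geometric fibre of $\pi_1$ over $(s:t)\in\PP^1(\bar\QQ)$ is the conic in $\PP^2$ defined by the diagonal quadratic form with entries $a_0s^2+a_2t^2$, $a_3s^2+a_1t^2$ and $a_4$. Such a conic is singular precisely when one of the diagonal entries vanishes, and since smoothness of $X_\bfa$ forces all of $a_0,\dots,a_4$ to be non-zero, the singular locus is cut out exactly by $\Delta(s,t)$. Each quadratic factor has two simple roots in $\PP^1(\bar\QQ)$, and a common root of the two factors would force $a_0a_1-a_2a_3=d=0$, contradicting smoothness. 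Hence $\Delta$ has four distinct roots in $\PP^1(\bar\QQ)$, establishing (i) and (ii).

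For (iii), the hypothesis $-a_0a_2, -a_1a_3\notin \QQ^{*2}$ guarantees that each quadratic factor of $\Delta$ remains irreducible over $\QQ$ and thus cuts out a single closed point of $\PP^1_\QQ$. A direct inspection of the relevant quotient rings identifies the residue fields as $\QQ(T')=\QQ(\sqrt{-a_0a_2})$ and $\QQ(T'')=\QQ(\sqrt{-a_1a_3})$. To compute the residues of $Q$, I would set $u=s/t$, write $Q=(f,g)$ with $f=-a_4(a_0u^2+a_2)$ and $g=-a_4(a_3u^2+a_1)$, and apply the standard tame symbol formula
\[
\partial_P(f,g)=(-1)^{v_P(f)v_P(g)}\,\overline{f^{v_P(g)}\,g^{-v_P(f)}}\in\kappa(P)^*/\kappa(P)^{*2}.
\]
At $T'$ the function $f$ has valuation one while $g$ is a unit, so the residue reduces to $g(T')$ modulo squares; substituting $u^2=-a_2/a_0$ yields $g(T')=-a_4d/a_0$, which coincides with $-a_0a_4d$ modulo $\QQ(T')^{*2}$ since $a_0^2$ is a square. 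Symmetrically at $T''$ one obtains $f(T'')=a_4d/a_3$ modulo $\QQ(T'')^{*2}$.

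I expect the one subtle point to be reconciling the value $a_4d/a_3$ with the asserted $-a_1a_4d$ in $\QQ(T'')^*/\QQ(T'')^{*2}$. The crucial observation is that the defining relation of $T''$ exhibits $-a_1/a_3$ as $u^2$, hence as a square in the residue field; multiplying $a_4d/a_3$ by this square produces $-a_1a_4d/a_3^2\equiv -a_1a_4d$ modulo $\QQ(T'')^{*2}$. This is the only step where the specific geometry of the closed point enters non-trivially, so it is the main place to proceed carefully; the rest of the proof is a routine application of the discriminant criterion and of the tame symbol formula.
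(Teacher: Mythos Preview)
Your proposal is correct and follows essentially the same approach as the paper, which simply states that the result ``follows immediately from the explicit equation \eqref{eqn:conic bundle} and a simple calculation.'' Your write-up supplies precisely that calculation: the discriminant criterion for singular fibres of a diagonal conic and the tame symbol formula for residues of a quaternion algebra, together with the observation that $-a_1/a_3$ is a square in $\QQ(T'')$ to match the stated normal form.
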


\begin{proof}
	The result follows immediately from the explicit equation \eqref{eqn:conic bundle} and a simple calculation.
\end{proof}

We continue with the structure of $\Br X_\bfa / \Br \QQ$ given in the next proposition.

\begin{proposition}
	\label{prop:BrXconic}
	Let $(*)$ denote the condition that  $-a_0a_4d \notin \QQ(\sqrt{-a_0a_2})^{*2}$, $-a_1a_4d \notin \QQ(\sqrt{-a_1a_3})^{*2}$ and that one of $-a_0a_2$, $-a_1a_3$ or $a_0a_1$ is not in $\QQ^{*2}$.	Then we have
	\[
		\Br X_{\bfa} / \Br \QQ =
		\begin{cases}
			(\ZZ/2\ZZ)^2 &\mbox{if } a_0a_1, a_2a_3, -a_0a_2 \in \QQ^{*2} \mbox{ and } -a_0a_4d \not\in \QQ^{*2}, \\
			\ZZ/2\ZZ &\mbox{if } (*),\\ 
			\{\id\} &\mbox{otherwise.}
		\end{cases}
	\]
\end{proposition}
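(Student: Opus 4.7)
The plan is to compute $\Br X_\bfa/\Br\QQ$ via the conic bundle $\pi_1$ using the Faddeev exact sequence for $\Br\QQ(\PP^1)$. Since $X_\bfa$ is geometrically rational, $\Br\overline{X_\bfa}=0$, so $\Br X_\bfa$ is algebraic and $2$-torsion; the inclusion $\Br X_\bfa/\Br\QQ\hookrightarrow \Br\QQ(X_\bfa)/\Br\QQ$ lets us detect Brauer classes on the generic point. The pullback $\pi_1^*\colon \Br\QQ(\PP^1)\to \Br\QQ(X_\bfa)$ has kernel $\langle Q\rangle$, because the generic fibre acquires a rational point over $\QQ(X_\bfa)$, and the standard conic-bundle argument (see \cite[\S3]{LM18}) shows that every class of $\Br X_\bfa/\Br\QQ$ lifts to a class in $\Br\QQ(\PP^1)/\langle Q,\Br\QQ\rangle$.

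Setting $u=s/t$, a representative $A\in\Br\QQ(\PP^1)$ gives $\pi_1^*A\in\Br X_\bfa$ precisely when all its residues on codimension-one divisors of $X_\bfa$ vanish. At a closed point $P\in\PP^1$ under a smooth fibre this amounts to $\Res_P(A)=0$. At $T'$ and $T''$ it amounts to $\Res_{T'}(A)$ (resp.\ $\Res_{T''}(A)$) restricting trivially to the function field of each component of the degenerate fibre; since these components are individually defined over $\QQ(T')(\sqrt{-a_0a_4d})$ and $\QQ(T'')(\sqrt{-a_1a_4d})$, the admissible residues form the subgroups $\{1,-a_0a_4d\}\subset\QQ(T')^*/\QQ(T')^{*2}$ and $\{1,-a_1a_4d\}\subset\QQ(T'')^*/\QQ(T'')^{*2}$. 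By Faddeev we may then assume
\[
A\equiv (c_1,\,a_0u^2+a_2) + (c_2,\,a_3u^2+a_1) \pmod{\Br\QQ},
\]
with $c_1,c_2\in\QQ^*$ subject to these constraints.

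The case analysis is then driven by whether $-a_0a_2$, $-a_1a_3$, $a_0a_1$ are squares. When $-a_0a_2,-a_1a_3\notin\QQ^{*2}$, the kernel of $\QQ^*/\QQ^{*2}\to\QQ(T')^*/\QQ(T')^{*2}$ is $\{1,-a_0a_2\}$, so a non-trivial $c_1$ exists iff $-a_0a_4d\notin\QQ(\sqrt{-a_0a_2})^{*2}$, and likewise for $c_2$; a direct computation using the projection formula for norms from $\QQ(\sqrt{-a_0a_2})$ and $\QQ(\sqrt{-a_1a_3})$ shows that the candidates $(-a_0a_4d,\,a_0u^2+a_2)$ and $(-a_1a_4d,\,a_3u^2+a_1)$ sum to $Q$ modulo $\Br\QQ$, so they represent the same class and $\Br X_\bfa/\Br\QQ\simeq\ZZ/2\ZZ$ under the hypothesis $(*)$. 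When $a_0a_1,a_2a_3,-a_0a_2$ are simultaneously squares (forcing also $-a_1a_3\in\QQ^{*2}$), both $T'$ and $T''$ split into two $\QQ$-rational fibres, yielding four rational singular fibres each with admissible residues in $\{1,-a_0a_4d\}\subset\QQ^*/\QQ^{*2}$; the Faddeev reciprocity constraint forces the product of the four residues to be a square, cutting the $16$ configurations to $8$, and modding out by the action of $Q$, which uniformly flips all four residues by $-a_0a_4d$, leaves $4$ classes, non-trivial exactly when $-a_0a_4d\notin\QQ^{*2}$. In every remaining configuration the residue constraints combine with the relation modulo $Q$ to collapse all candidates to the trivial class.

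The main technical obstacle is the bookkeeping in the $(\ZZ/2\ZZ)^2$ case, where one must carefully combine the Faddeev reciprocity identity with the explicit $Q$-shift on the four rational residues to exhibit two independent generators; an analogous but milder calculation under $(*)$ is needed to verify, via the projection formula, that the two natural candidate generators coming from $\pi_1$ and $\pi_2$ collapse to a single non-trivial class. Throughout, the a priori upper bound $\Br X_\bfa/\Br\QQ\leq (\ZZ/2\ZZ)^2$ for quartic del Pezzo surfaces \cite{Man74,SD93} confirms that the generators exhibited exhaust the group.
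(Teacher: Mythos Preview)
Your proposal is correct and follows essentially the same approach as the paper: both compute $\Br X_\bfa/\Br\QQ$ via the conic bundle $\pi_1$, invoking Faddeev's exact sequence together with Grothendieck's purity (the paper's diagram \eqref{eqn:Faddeev+Purity}) and the residue data of Lemma~\ref{lem:conic_bundle}, then carrying out the case analysis on whether $-a_0a_2$, $-a_1a_3$, $a_0a_1$ are squares. The paper's proof is deliberately terse (``follows from Lemma~\ref{lem:conic_bundle} and a standard computation of residues''), whereas you have written out that standard computation in full, including the $16\to 8\to 4$ count in the $(\ZZ/2\ZZ)^2$ case and the collapse of the two candidate generators under $(*)$ via the relation modulo $Q$; this is exactly the bookkeeping the paper is suppressing.
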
 

It is worth mentioning that under the assumption $X_\bfa(\bfA_{\QQ}) \neq \emptyset$ we have $\Br X_\bfa / \Br \QQ$ trivial if and only if $X_\bfa$ is rational. This is shown in our forthcoming work. We now proceed with the proof of Proposition~\ref{prop:BrXconic}.

\begin{proof}
	The morphism $\pi_1: X_\bfa \to \PP^1$ defines a pull-back $\pi_1^*$ on the level of Brauer groups. Thus Faddeev's reciprocity law and Grothendieck's purity theorem give the following commutative diagram
	\begin{equation}
		\label{eqn:Faddeev+Purity}
		\begin{tikzcd}
			0 \ar{r}
			&\Br \QQ \ar{r} \ar{d}{\pi_1^*}
			&\Br \QQ(T) \ar{r} \ar{d}{\pi_1^*}
			& \bigoplus \HH^1(\QQ[T]/(P(T)), \QQ/\ZZ) \\
			0 \ar{r} 
			&\Br X_\bfa \ar{r}
			&\Br \QQ(X_\bfa) \ar{r}
			& \bigoplus \HH^1(\QQ(Y), \QQ/\ZZ),
		\end{tikzcd}
	\end{equation}
	where the top sum is taken over all irreducible polynomials $P(T) \in \QQ[T]$ and the bottom sum is taken over all integral subvarieties $Y$ of $X_\bfa$ of codimension 1. 

	If $\alpha \in \Br X_\bfa$, then the image of $\alpha$ in the Brauer group $\Br \QQ(X_\bfa)$ of the function field of $X_\bfa$ is the pull-back $\pi_1^*A$ of some $A \in \Br \QQ(T)$ by \cite[Thm.~2.2.1]{CTSW90}. Proposition~\ref{prop:BrXconic} now follows from Lemma~\ref{lem:conic_bundle} and a standard computation of residues.	
\end{proof}

\begin{remark}
	\label{rem:Br4}
	Note that in the case when $-a_0a_2 \in \QQ^{*2}$ there is an obvious rational point $(s,t;x,y,z) = (1:\sqrt{-a_0/a_2};1:0:0)$ on $X_\bfa$. Thus there is no Brauer--Manin obstruction to the existence of rational points on $X_\bfa$ if $\Br X_\bfa / \Br \QQ$ is of order four. 
\end{remark}

\subsection{Generators}
In order to study the Brauer--Manin obstruction to the Hasse principle or to weak approximation we can assume that $X_\bfa$ has a non-trivial Brauer group, that is $-a_1a_4d \notin \QQ(\sqrt{-a_1a_3})^{*2}, -a_0a_4d \notin \QQ(\sqrt{-a_0a_2})^{*2}$. By Theorem~\ref{thm:Br} the number of surfaces with Brauer group of order four is $O(B^{3})$ which negligible compared to the bounds needed to establish Theorems~\ref{thm:BMO} and \ref{thm:WA}. Thus we can assume from now on that $\Br X_\bfa / \Br \QQ \simeq \ZZ/2\ZZ$. 

Let $A \in \Br \QQ(\PP^1)$ be given by
\[
 	A
 	=	(a_0(s/t)^2 + a_2, -a_0a_4d).
\]
It is clear that the image $\alpha = \pi_1^*A \in \Br \QQ(X_\bfa)$ of $A$ is unramified along each irreducible divisor of $X_\bfa$ except possibly on the irreducible components of $D = \{a_0(s/t)^2 + a_2 = 0\} \subset X_\bfa$. To fix ideas assume that $-a_0a_2$ is not a rational square, hence $D$ is irreducible over $\QQ$. A similar analysis yields the same conclusion when $-a_0a_2$ is a square. Along $D$ one has
\[
	-a_0a_4d = (a_0a_4z/ty)^2.
\]
Thus $\Res_{\QQ(D)}\alpha = -a_0a_4d \in \QQ(D)^*/\QQ(D)^{*2}$ is trivial which implies that $\alpha$ is unramified on $D$. Alternatively, one can check that $\QQ(D) = \QQ(\sqrt{-a_0a_2}, \sqrt{-a_0a_4d})(T)$ where $-a_0a_4d$ is clearly a square. Since $X_\bfa$ is smooth we can apply Grothendieck's purity theorem (the bottom line of \eqref{eqn:Faddeev+Purity}) to conclude that $\alpha$ lies inside the image of $\Br X_\bfa \rightarrow \Br \QQ(X_\bfa)$. A similar analysis as in Lemma~\ref{lem:conic_bundle}~(iii) shows that the image of $\alpha$ in $\Br X_\bfa / \Br \QQ$ is non-trivial and hence it generates $\Br X_\bfa / \Br \QQ \simeq \ZZ/2\ZZ$.

\section{Local points}
\label{sec:local}
\subsection{Local solubility}
\label{subsec:local points}
In this subsection we are concerned with the existence of local points on $X_\bfa$ given in \eqref{eq:dP4 main}. To do so we first define an equivalence relation on $\ZZ_{\ge 0}^5$ in the spirit of \cite[\S2]{BBL16}. We say that 
\[
	(\alpha_0, \alpha_1, \alpha_2, \alpha_3, \alpha_4) \sim (\beta_0, \beta_1, \beta_2, \beta_3, \beta_4) 
\]
if and only if at least one of the following holds.
\begin{itemize}
	\item
	$(\alpha_0, \alpha_1, \alpha_2, \alpha_3, \alpha_4) = (\beta_1, \beta_0, \beta_2, \beta_3, \beta_4)$,

	\item
	$(\alpha_0, \alpha_1, \alpha_2, \alpha_3, \alpha_4) = (\beta_2, \beta_3, \beta_0, \beta_1, \beta_4) $,
	
	\item
	$(\alpha_0, \alpha_1, \alpha_2, \alpha_3) = (\beta_0, \beta_1, \beta_2, \beta_3,)$ and $\alpha_4 \equiv \beta_4 \bmod 2$.

	\item 
	There are $k, \ell, m, n \in \ZZ$ satisfying $k + \ell = m + n$ such that
	\[
		(\alpha_0, \alpha_1, \alpha_2, \alpha_3, \alpha_4) = (\beta_0 + 2k, \beta_1 + 2\ell, \beta_2 + 2m, \beta_3 + 2n, \beta_4).
	\]

	\item 
	There is some $k \in \ZZ$ such that 
	\[
		(\alpha_0, \alpha_1, \alpha_2, \alpha_3, \alpha_4) = (\beta_0 + k, \beta_1 + k, \beta_2 + k, \beta_3 + k, \beta_4 + k).
	\]
\end{itemize}

Let $p$ be a prime number and let $\valp$ denote the $p$-adic valuation of an element of $\Qp$. The above equivalence relation has the property that for each $\bfa, \bfa' \in \ZZ_{\ge 0}^5$ with
\[
	(\valp(a_0), \valp(a_1), \valp(a_2), \valp(a_3), \valp(a_4)) \sim (\valp(a_0'), \valp(a_1'), \valp(a_2'), \valp(a_3'), \valp(a_4')) 
\]
we have $X_\bfa(\Qp) \neq \emptyset$ if and only if $X_{\bfa'}(\Qp) \neq \emptyset$. We will make great use of this fact. Unlike in \cite[\S2]{BBL16} in our setting when we quotient $\ZZ_{\ge 0}^5$ by the equivalence relation $\sim$ we do not get a finite list of representatives. Thus we require a more involved approach in order to understand the local solubility of $X_\bfa$. For convenience, if $p$ is an odd prime let
\[
 \left[\frac{a}{p}\right] = \(\frac{a/p^{\valp(a)}}{p}\),
\]
where the second entry is the Legendre symbol. We shall give necessary and sufficient conditions for the existence of local points on $X_\bfa$ in the next proposition.

\begin{proposition}
	\label{lem:local solubility}
	Let $p \neq 2$ be a place of $\QQ$. Then we have $X_\bfa(\Qp) = \emptyset$ if and only if one of the following holds.
	\begin{enumerate}[label=\emph{(\roman*)}]
		\item $p = \infty$ and all $a_i$ have the same sign,
		
		\item $p = 3$, and the following hold
		\begin{itemize}
			\item $\val3(a_0) \equiv \val3(a_1) \equiv \val3(a_2) \equiv \val3(a_3) \nequiv \val3(a_4) \bmod 2$,

			\item $\val3(a_0a_1) = \val3(a_2a_3)$,

			\item $\left[\frac{-a_0 a_2}{3}\right] = \left[\frac{-a_0 a_3}{3}\right] = \left[\frac{-a_1 a_2}{3}\right] = \left[\frac{-a_1 a_3}{3}\right] = -1$.
		\end{itemize}

		\item $p$ is an odd prime, $(i, j)$ is either $(0, 1)$ or $(2, 3)$, we have $\{\ell, m\} = \{0, 1, 2, 3\} \setminus \{i, j\}$ and the following hold
		\begin{itemize}
			\item $\valp(a_i) \equiv \valp(a_j) \equiv \valp(a_4) \bmod 2$, 
			
			\item $\valp(a_\ell) \equiv \valp(a_m) \nequiv \valp(a_i) \bmod 2$, 
			
			\item $\valp(a_ia_j) = \valp(a_\ell a_m)$ and $\left[\frac{-a_i a_4}{p}\right] = \left[\frac{-a_j a_4}{p}\right] = -1$ or \\
			$\valp(a_ia_j) > \valp(a_\ell a_m)$ and $\left[\frac{-a_i a_4}{p}\right] = \left[\frac{-a_j a_4}{p}\right] = \left[\frac{-a_\ell a_m}{p}\right] = -1$.
		\end{itemize}

		\item $p$ is an odd prime, $(i, j, k)$ is one of $(0, 1, 2)$, $(0, 1, 3)$, $(2, 3, 0)$ or $(2, 3, 1)$ as an ordered triple, $\{\ell\} = \{0, 1, 2, 3\} \setminus \{i, j, k\}$ and the following hold
		\begin{itemize}
			\item $\valp(a_i) \equiv \valp(a_j) \equiv \valp(a_k) \bmod 2$, 
			
			\item $\valp(a_\ell) \equiv \valp(a_4) \nequiv \valp(a_i) \bmod 2$, 
	
			\item $\valp(a_i a_j) > \valp(a_k a_\ell)$,
	
			\item $\left[\frac{-a_i a_k}{p}\right] = \left[\frac{-a_j a_k}{p}\right] = \left[\frac{-a_\ell a_4}{p}\right] = -1$.
		\end{itemize}
	\end{enumerate}
\end{proposition}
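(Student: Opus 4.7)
My approach is to exploit the conic bundle structure $\pi_1 : X_\bfa \to \PP^1$ from \eqref{eqn:conic bundle}, which reduces the question of $\Qp$-solubility of $X_\bfa$ to the existence of $(s:t) \in \PP^1(\Qp)$ over which the fiber
\[
  C_{s,t} : (a_0 s^2 + a_2 t^2)x^2 + (a_3 s^2 + a_1 t^2)y^2 + a_4 z^2 = 0
\]
has a $\Qp$-point. By the classical Hilbert symbol criterion, such a point exists iff
\[
  \bigl( -(a_0 s^2 + a_2 t^2) a_4,\ -(a_3 s^2 + a_1 t^2) a_4 \bigr)_p = 1,
\]
so I end up searching for $(s:t)$ that trivializes this Hilbert symbol. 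The real case (i) is handled separately: if all $a_i$ share a sign, the second form in \eqref{eq:dP4 main} is definite and $X_\bfa(\RR) = \emptyset$; otherwise, a direct construction (using that at least one mixed-sign fiber $C_{s,t}$ is indefinite) produces an $\RR$-point.

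For odd primes $p$, I first invoke the equivalence relation $\sim$ from the paragraph preceding the proposition, which the paper records as preserving $\Qp$-solubility. This lets me normalize the valuation vector $(\valp(a_0),\dots,\valp(a_4))$ so that each entry lies in $\{0,1\}$ and the nonzero coordinates sit in one of a few canonical positions (segregating the pair-patterns $\{a_0,a_1\}$, $\{a_2,a_3\}$, $\{a_4\}$ compatibly with the symmetries of $\pi_1$ and $\pi_2$). After normalization the list of possible valuation configurations is short, and each pattern matches at most one of the case schemata (ii)--(iv).

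In each configuration, I compute the Hilbert symbol above by scanning the candidate loci for $(s:t)$: the four points where $\Delta(s,t) = (a_0 s^2 + a_2 t^2)(a_3 s^2 + a_1 t^2)$ vanishes (the singular fibers of Lemma~\ref{lem:conic_bundle}), the points $(1:0)$ and $(0:1)$, and a generic smooth $\FF_p$-fiber. The sufficiency direction (that conditions (ii)--(iv) force $X_\bfa(\Qp) = \emptyset$) reduces to showing that in each of these cases every candidate Hilbert symbol equals $-1$; here the stated Legendre symbol conditions $\left[\frac{-a_i a_j}{p}\right] = -1$ appear precisely as the obstructions at the singular fibers, while the valuation inequalities control the behavior at the generic and boundary fibers. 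For necessity, outside the stated configurations I exhibit an explicit $(s:t) \in \PP^1(\Qp)$ making $C_{s,t}$ isotropic: in most cases it suffices to pick $(s:t)$ with $\min\{\valp(a_0 s^2 + a_2 t^2), \valp(a_3 s^2 + a_1 t^2)\} = 0$ and apply Chevalley--Warning plus Hensel to the smooth $\FF_p$-reduction of $C_{s,t}$.

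\textbf{The main obstacle} is the prime $p = 3$ isolated in case (ii): when all of $a_0,\ldots,a_3$ share a common valuation parity opposite to $a_4$ and $\val3(a_0 a_1) = \val3(a_2 a_3)$, neither the singular nor the generic fibers give automatic solubility because $\FF_3$ has only the single nonzero square $1$ and $-1 \notin \FF_3^{*2}$, so sums $\alpha s^2 + \beta t^2$ in $\FF_3$ can fail to represent zero nontrivially. Here I must rule out every $(s:t) \in \PP^1(\Qp)$ by a simultaneous valuation analysis of both $a_0 s^2 + a_2 t^2$ and $a_3 s^2 + a_1 t^2$: matching the Legendre symbol hypotheses $\left[\frac{-a_i a_j}{3}\right] = -1$ for all four pairs $(i,j) \in \{0,1\}\times\{2,3\}$ to the parity ladder of $\val3(s/t)$. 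The extra valuation equality $\val3(a_0 a_1) = \val3(a_2 a_3)$ intervenes precisely to ensure that no single fiber is accidentally smoothable. Once this sub-case is dispatched, the remaining odd $p \geq 5$ configurations in (iii)--(iv) follow from the same template with fewer Hilbert symbol constraints, the Legendre symbol reciprocity $\left(\frac{-1}{p}\right) = \pm 1$ being absorbed into the stated conditions.
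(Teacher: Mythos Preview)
Your overall strategy matches the paper's: reduce via the conic bundle \eqref{eqn:conic bundle}, normalize valuations using the relation~$\sim$, and split into parity configurations. The real case and the configurations underlying (iii) and (iv) are handled essentially as you describe.

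However, there is a genuine gap in your treatment of the configuration behind case~(ii), namely $\valp(a_0)\equiv\valp(a_1)\equiv\valp(a_2)\equiv\valp(a_3)\nequiv\valp(a_4)\bmod 2$ with $\valp(a_0a_1)=\valp(a_2a_3)$ and all four symbols $\left[\frac{-a_ia_j}{p}\right]=-1$. You correctly flag $p=3$ as requiring care, but you then assert that for $p\ge 5$ this configuration causes no trouble and that ``Chevalley--Warning plus Hensel'' on a smooth $\FF_p$-reduction of $C_{s,t}$ suffices. It does not. After normalizing so that $a_0,\dots,a_3$ are units and $\valp(a_4)=1$, the reduction of $C_{s,t}$ over $\FF_p$ is the rank-two form $(a_0s^2+a_2t^2)x^2+(a_3s^2+a_1t^2)y^2$, which is \emph{singular} as a conic; Chevalley--Warning only hands you the useless point $x=y=0$. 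A liftable point exists precisely when $-(a_0s^2+a_2t^2)(a_3s^2+a_1t^2)\in\FF_p^{*2}$ for some $(s:t)$, i.e.\ when the genus-one curve
\[
  C:\quad Y^2=-(a_0X^2+a_2)(a_3X^2+a_1)
\]
has an $\FF_p$-point. Under the four Legendre hypotheses this curve has no obvious rational point (the singular-fiber loci $a_0s^2+a_2t^2=0$ and $a_3s^2+a_1t^2=0$ are not defined over $\FF_p$, and the values at $s=0$, $t=0$ are non-squares), so scanning your ``candidate loci'' yields nothing. The paper closes this gap with the Hasse--Weil bound $\#C(\FF_p)\ge p+1-2\sqrt{p}>0$ for $p\ge 5$; it is exactly the failure of this inequality at $p=3$ that singles out case~(ii), not the fact that $\FF_3$ has only one nonzero square. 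Your diagnosis of why $p=3$ is special is therefore off, and more importantly your argument for $p\ge 5$ is missing the key input.

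A secondary point: the relation $\sim$ does not let you force every $\valp(a_i)$ into $\{0,1\}$, because the shifts $(2k,2\ell,2m,2n)$ are constrained by $k+\ell=m+n$. The paper works instead with representatives like $(2m,0,0,0,1)$ or $(2,0,1,1,0)$ and tracks the residual inequality $\valp(a_ia_j)\gtrless\valp(a_ka_\ell)$ explicitly; this is what produces the valuation comparisons in the statements of (iii) and (iv). Your plan should be adjusted accordingly.
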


\begin{proof}
	One easily verifies that if $p = \infty$, then $X(\RR) \neq \emptyset$ if and only if two of the $a_i$ have different signs since this is equivalent to the bottom quadratic from in \eqref{eq:dP4 main} being indefinite.

	Let $p$ be an odd prime now. It is clear that at least three of the $a_i$ have the same parity of their $p$-adic valuations. Let $\alpha = (\valp(a_0), \valp(a_1), \valp(a_2), \valp(a_3), \valp(a_4))$. We distinguish between the following cases.
	\begin{enumerate}[label=(\alph*)]
		\item 
		If $\valp(a_0) \equiv \valp(a_2) \equiv \valp(a_4) \bmod 2$ we set $x_1 = x_3 = 0$. It thus suffices to show that the diagonal projective conic $a_0x_0^2 + a_2x_2^2 + a_4x_4^2 = 0$ has a $p$-adic point. In view of $\sim$ since $\valp(a_0), \valp(a_2), \valp(a_4)$ have the same parity we can assume that $p \nmid a_0a_2a_4$. Such conics are known to have a smooth $\Fp$-point which is easily verified for example by fixing $x_4 \in \Fp^*$ and then counting the number of possible values that $a_0x_0^2$ and $a_2x_2^2 + a_4x_4^2$ can take. Moreover, a smooth $\Fp$-point on the conic lifts to a $\Zp$-point on $X_\bfa$ with $x_1 = x_3 = 0$ by Hensel's lemma. Indeed, the Jacobian matrix of $X_\bfa$ is
		\[
			J(\bfx) = \left(
			\begin{array}{ccccc}
				x_1 & x_0 & -x_3 & -x_2 & 0 \\
				2a_0x_0 & 2a_1x_1 & 2a_2x_2 & 2a_3x_3 & 2a_4x_4
			\end{array}
			\right).
		\]
		The above argument shows that either the minor $J_{1,2}$ or the minor $J_{3,4}$ has a determinant a unit in $\Fp$ when evaluated at $\bfx$. The same analysis applies if $a_0, a_3, a_4$ or $a_1, a_2, a_4$ or $a_1, a_3, a_4$ have the same parity of their $p$-adic valuations.

		\item 
		If $\valp(a_0) \equiv \valp(a_1) \equiv \valp(a_4) \bmod 2$ we can assume that $\valp(a_2) \equiv \valp(a_3) \nequiv \valp(a_0) \bmod 2$, otherwise we fall into (a). If $p \nmid (a_0, a_1, a_4)$ we can apply a similar argument to the one in (a) by solving $a_0x_0^2 + a_1x_1^2 + a_4x_4^2 \equiv 0 \bmod p$ first and then taking $x_2, x_3$ such that $x_2x_3 \equiv x_0x_1 \bmod p$. Thus $X_\bfa(\Qp) \neq \emptyset$. Moreover, if $\valp(a_0a_1) < \valp(a_2a_3)$, then $\alpha \sim (0, 0, 2k + 1, 1, 0)$ for some $k \ge 0$ and hence $X_\bfa(\Qp) \neq \emptyset$. 
		
		Assume that $\valp(a_0a_1) \ge \valp(a_2a_3)$. Recall \eqref{eqn:conic bundle}. Our investigation continues with an analysis of the following system
		\[
			\begin{split}
				a_0(sx)^2 + a_1(ty)^2 + a_4z^2 = 0,\\
				a_2(tx)^2 + a_3(sy)^2 = 0.
			\end{split}
		\]
		We have $X_\bfa(\Qp) \neq \emptyset$ if $\left[\frac{-a_0a_4}{p}\right] = 1$ by setting $t = y = 0$. Similarly, $X_\bfa(\Qp) \neq \emptyset$ if $\left[\frac{-a_1a_4}{p}\right] = 1$. 

		If $\valp(a_0a_1) = \valp(a_2a_3)$, then $\alpha \sim (2, 0, 1, 1, 0)$ and hence one can conclude that $X_\bfa(\Qp) = \emptyset$ if $\left[\frac{-a_0a_4}{p}\right] = \left[\frac{-a_1a_4}{p}\right] = -1$ by looking at the $p$-adic valuation of possible solutions to the system above. 

		On the other hand, if $\valp(a_0a_1) > \valp(a_2a_3)$, then $\alpha \sim (2m, 2, 1, 1, 2)$ for some $m > 0$ and hence $X_\bfa(\Qp) \neq \emptyset$ if $\left[\frac{-a_2a_3}{p}\right] = 1$ since this condition implies the existence of a smooth $\Fp$-point on $X_\bfa$ mod $p$.

		We assume from now on that $\valp(a_0a_1) > \valp(a_2a_3)$ and
		\[
			\left[\frac{-a_0a_4}{p}\right] 
			= \left[\frac{-a_1a_4}{p}\right] 
			= \left[\frac{-a_2a_3}{p}\right]
			= -1.
		\]
		We claim that $X_\bfa(\Qp) = \emptyset$. If there was a point in $X_\bfa(\Qp)$, then by looking at the possible $p$-adic valuations of its coordinates we see that at least one of $\left[\frac{-a_0a_4}{p}\right]$, $\left[\frac{-a_1a_4}{p}\right]$ or $\left[\frac{-a_2a_3}{p}\right]$ has to be trivial, a contradiction! We treat the case $\valp(a_2) \equiv \valp(a_3) \equiv \valp(a_4) \bmod 2$ in a similar way.

		\item
		If $\valp(a_0) \equiv \valp(a_1) \equiv \valp(a_2) \nequiv \valp(a_3) \equiv \valp(a_4) \bmod 2$, then the analysis is very similar to the one in (b) modulo the fact that there is no symmetry in $a_3$ and $a_4$. Here \eqref{eqn:conic bundle} transforms into
		\[
			\begin{split}
				a_0(sx)^2 + a_1(ty)^2 + a_2(tx)^2 = 0,\\
				a_3(sy)^2 + a_4z^2 = 0.
			\end{split}
		\]
		Once again we begin by observing that $X_\bfa(\Qp) \neq \emptyset$ unless
		\[
			\left[\frac{-a_0a_2}{p}\right] = \left[\frac{-a_1a_2}{p}\right] = \left[\frac{-a_3a_4}{p}\right] = -1,
		\]
		which we assume from now on. It is clear that $\alpha \sim (2m, 0, 0, 2\ell + 1, 1)$ for some $m, \ell \ge 0$. If $2m < 2\ell + 1$, then we reduce once more to finding a $p$-adic point on the conic $a_0u^2 + a_1v^2 + a_2w^2 = 0$ and thus $X_\bfa(\Qp) \neq \emptyset$. On the other hand, if $2m > 2\ell + 1$, then as in (b) we see that $X_\bfa(\Qp) = \emptyset$. An identical analysis applies in the remaining cases of (v).

		\item
		If $\valp(a_0) \equiv \valp(a_1) \equiv \valp(a_2) \equiv \valp(a_3) \nequiv \valp(a_4) \bmod 2$, then via the above relation we can assume that $\alpha = (2m , 0, 0, 0, 1)$ for some $m > 0$ if $\valp(a_0a_1) > \valp(a_2a_3)$. Once again by \eqref{eqn:conic bundle} we need to analyse
		\begin{equation}
			\label{eqn:case d}
			a_0(sx)^2 + a_1(ty)^2 + a_2(tx)^2 + a_3(sy)^2 = 0.
		\end{equation}
		It thus suffices to show that $a_1u^2 + a_2v^2 + a_3w^2 = 0$ has a $p$-adic point. As explained in (a) this is always the case. A similar argument applies when $\valp(a_0a_1) < \valp(a_2a_3)$.

		Assume now that $\valp(a_0a_1) = \valp(a_2a_3)$ and thus we have $\alpha \sim (0, 0, 0, 0, 1)$. In view of \eqref{eqn:case d} there is a smooth $\Fp$-point on $X_\bfa$ if $-a_ia_j \bmod p \in \Fp^{*2}$ for some $(i, j) \in \{(0, 2), (0, 3), (1, 2), (1, 3)\}$. On the other hand, if the reduction of all of the above $-a_ia_j \bmod p$ is a non-square in $\Fp$, then any $\Fp$-point must satisfy $tx \neq 0$. The change of variables $s/t = X$, $y/x = Y$ reduces the problem to showing the existence of an $\Fp$-point on $Y^2 = -(a_0X^2 + a_2)/(a_3X^2 + a_1)$. In fact, our assumptions imply that any $\Fp$-point on it must be smooth if it exists. Clearly, the existence of such an $\Fp$-point is equivalent to the existence of an $\Fp$-point on the genus one curve
		\[
			C: \quad Y^2 = -(a_0X^2 + a_2)(a_3X^2 + a_1).
		\]
		The Hasse--Weil bound implies that $\#C(\Fp) \ge p + 1 - 2\sqrt{p}$. This quantity is clearly positive if $p \ge 5$. On the other hand, if $p = 3$, then the condition $-a_ia_j \bmod p$ is a non-square in $\Fp$ for each $(i, j) \in \{(0, 2), (0, 3), (1, 2), (1, 3)\}$ says that $a_0 \equiv a_1 \equiv a_2 \equiv a_3 \bmod 3$. One easily checks that in this case $X_\bfa(\QQ_3) = \emptyset$. This completes the proof.
	\end{enumerate}
\end{proof}

\subsection{Local densities}
We continue with the study of the proportion of everywhere locally soluble $X_\bfa \in \sF$. For each place $p$ of $\QQ$ let $X_{\bfa, p} = X_\bfa \times_\QQ \Qp$ and define
\[
	\begin{split}
		\Omega_p
		&= \left\{ \bfa \in \Zp^5 \ : \ p \nmid \bfa, X_{\bfa, p} \mbox{ smooth and } X_{\bfa, p}(\Qp) \neq \emptyset \right\}, \\
		\Omega_\infty
		&= \left\{ \bfa \in [-1, 1]^5  \ : \ X_{\bfa, \infty} \mbox{ smooth and } X_{\bfa, \infty}(\RR) \neq \emptyset \right\}.
	\end{split}
\]
Let $\mu_p$ be the normalised Haar measure on $\Zp^5$ such that $\mu_p\( \Zp^5 \) = 1$ and let $\mu_{\infty}$ be the Lebesgue measure on $\RR^5$. The local densities $\sigma_p$ corresponding to $X_\bfa$ are then defined by
\[
	\sigma_p
	=\mu_p\(\Omega_p\), \quad
	\sigma_{\infty}
	= \frac{\mu_\infty(\Omega_\infty)}{\mu_{\infty}(\left\{ \bfa \in [-1, 1]^5  \ : \  X_{\bfa, \infty} \mbox{ smooth}\right\})}.
\]
We note that this definition of local densities agrees with the one given in \cite{BBL16}. 

We shall estimate $\sigma_p$ in the next proposition. For technical reasons we avoid the calculation of $\sigma_2$ here. However, it is easy to see that $\sigma_2 > 0$, for example by calculating the proportion of $\bfa \in \ZZ_2^5$ such that $a_i$ are all units in $\ZZ_2$ and $a_1 \equiv -a_4 \bmod 8$.

\begin{proposition}
	\label{prop:local densities}
	We have $\sigma_\infty = 15/16$ and for each odd prime $p$ the following holds.
	\[
		\sigma_p = 
		\begin{cases}
		\frac{63693071}{66355200} &\mbox{if } p = 3, \\
		1 - \frac{1}{2 p^2} + \frac{9}{4 p^3} + O\(\frac{1}{p^4}\) &\mbox{if } p > 3.
		\end{cases}
	\]
\end{proposition}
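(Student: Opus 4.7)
The plan splits into the archimedean and the non-archimedean computations, both driven by Proposition~\ref{lem:local solubility}. For $\sigma_\infty$, the smoothness condition $(a_0a_1-a_2a_3)\prod_i a_i \neq 0$ cuts out a Lebesgue-null locus in $[-1,1]^5$, so the normalising denominator is simply $2^5 = 32$. By Proposition~\ref{lem:local solubility}(i), a smooth $X_{\bfa,\infty}$ fails to have a real point exactly when all $a_i$ have the same sign, which carves out two orthants in $[-1,1]^5$ of total volume $2$. Hence $\sigma_\infty = (32-2)/32 = 15/16$.

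For an odd prime $p$, I would decompose
\[
	1 - \sigma_p = \mu_p(\{\bfa \in \Zp^5 : p \mid \bfa\}) + \mu_p(\{\bfa \in \Zp^5 : p \nmid \bfa,\ X_{\bfa,p}(\Qp) = \emptyset\}),
\]
the first term equalling $1/p^5$ and the second being captured exactly by clauses (ii)--(iv) of Proposition~\ref{lem:local solubility}. A quick parity check shows the clauses are mutually exclusive: (iii) forces $\valp(a_4)$ to lie in the majority-parity class of three variables drawn from $\{a_0,a_1,a_2,a_3\}$, whereas (iv) forces it into the minority-parity pair, and (ii) forces all four of $a_0,\ldots,a_3$ into the common parity class opposite to $a_4$. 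Consequently the three contributions add without overlap, and for $p > 3$ only (iii) and (iv) are non-empty.

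The key computational device is that within each clause the conditions split into (a) a valuation pattern on $(\valp(a_0),\ldots,\valp(a_4))$ subject to parity and inequality constraints, and (b) finitely many Legendre symbol conditions on the unit parts $a_i/p^{\valp(a_i)} \bmod p$. Conditioned on the valuation pattern and on all relevant $a_i$ being units modulo $p$, the Legendre conditions are statistically independent (one checks that the multiplicative characters that appear are linearly independent in each sub-case), so each forced value $-1$ contributes a factor of $1/2$. The valuation layer is a product of geometric series
\[
	\sum_{n \geq 0,\ n \equiv \epsilon \,(\mathrm{mod}\, 2)} p^{-n}\!\(1-\tfrac{1}{p}\) = \frac{p^{1-\epsilon}}{p+1},
\]
together with a joint sum over $(\valp(a_i),\valp(a_j),\valp(a_\ell),\valp(a_m))$ constrained by $\valp(a_ia_j) \geq \valp(a_\ell a_m)$ (or the strict version). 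That joint sum factors through the change of variables $u = \valp(a_i)+\valp(a_j)$, $v = \valp(a_\ell)+\valp(a_m)$ and reduces to a simple double geometric series evaluable in closed form. Summing over the two sub-cases of (iii) and the four of (iv) and Taylor-expanding in $1/p$ yields the stated expansion for $p > 3$.

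For $p = 3$ I would run the same algorithm but without expanding: clause (ii) contributes an additional measure whose Legendre symbol probabilities must be computed exactly rather than replaced by $1/2$ (for $p = 3$ the quadratic residues modulo $3$ give the Legendre symbol probabilities only after careful bookkeeping of which unit parts are $\equiv 1, 2 \bmod 3$). One then sums the finite collection of exact rational contributions and subtracts from $1 - 1/3^5 = 242/243$. The main obstacle is precisely this $p = 3$ arithmetic: the explicit denominator $66355200$ suggests that one must enumerate the valuation patterns up to a bounded height, handle the ``equality versus strict inequality'' bifurcation inside (iii), and track the interaction between the primitivity correction and condition (ii), which is where the problem is most error-prone. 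For $p > 3$ the analogous work is cleaner because all tails of the geometric series can be absorbed into the $O(1/p^4)$ error, but one still needs to sum carefully over all $(i,j)$ in (iii) and all $(i,j,k)$ in (iv) and combine the resulting rational functions of $p$ into a single asymptotic expansion.
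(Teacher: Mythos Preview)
Your approach is essentially the paper's: split the insoluble locus via the clauses of Proposition~\ref{lem:local solubility}, note their disjointness, and compute each measure as a product of geometric valuation sums times $(1/2)^k$ for the $k$ independent Legendre constraints, with the paper enforcing primitivity $p\nmid\bfa$ by subtracting the ``all valuations positive'' subsum inside each clause rather than as your separate $1/p^5$ term. One clarification: your caution about the Legendre layer at $p=3$ is unnecessary, since a uniform unit in $\Zp^*$ takes each symbol value with probability exactly $1/2$ for every odd $p$; the only genuine extra work at $p=3$ is that clause~(ii) becomes non-empty, and there the four symbol conditions carry one dependency, yielding a factor $1/8$ exactly as in the paper's computation.
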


\begin{proof}
	One clearly has
	\[
		\frac{\mu_\infty(\{\bfa \in [-1, 1]^5 \ : \ (a_0a_1 - a_2a_3) \prod_{i = 0}^4a_i = 0\})}{\mu_\infty([-1, 1]^5)} = 0,
	\]
	since each of the conditions $a_i = 0$ or $a_0a_1 - a_2a_3 = 0$ defines a proper subspace of $\RR^5$. Thus
	\[
		\sigma_\infty
		= \frac{\mu_\infty(\Omega_\infty)}{\mu_{\infty}([-1, 1]^5)}.
	\]
	On the other hand, by Lemma~\ref{lem:local solubility}(i) we have $X_{\bfa, \infty}(\RR) = \emptyset$ if and only if all $a_i$ have the same sign. Thus 
	\[
		\sigma_\infty 
		= 1 - \frac{\mu_\infty (\{\bfa \in [-1, 1]^5 \ : \ \mbox{all $a_i$ have the same sign} \})}{\mu_\infty([-1,1]^5)}
		= 1 - \frac{1}{2^4} = \frac{15}{16}, 
	\]
	as claimed. 

	Assume now that $p$ is an odd prime. Once again we have
	\[
		\mu_p(\{\bfa \in \Zp^5 \ : \ (a_0a_1 - a_2a_3) \prod_{i = 0}^4a_i = 0\}) = 0,
	\]
	which allows us to safely ignore this condition from now on. The conditions on local solubility for $X_{\bfa, p}$ in this case are given in Lemma~\ref{lem:local solubility}(iv), (v) and in (iii) if $p = 3$. We continue with calculating the proportion of surfaces $X_{\bfa, p}$ with $\bfa \in \Zp^5$ satisfying each of these conditions. 

	We shall explain how to compute the contribution coming from Lemma~\ref{lem:local solubility}~(iii), the analysis of the other cases being similar. Firstly, we partition the set $S^{(iii)} = S_1^{(iii)} \cup S_1^{(iii)}$ of all $\bfa \in \Zp$ satisfying Lemma~\ref{lem:local solubility}(iii) into two disjoint subsets. Here $S_1^{(iii)}$ is the subset of $S^{(iii)}$ consisting of those $\bfa$ with $\valp(a_4)$ odd and $S_2^{(iii)}$ is its complement in $S^{(iii)}$. To compute the measure of $S_1^{(iii)}$ let
	\[
		\begin{split}
		\valp(a_0) &= 2k, \quad
		\valp(a_1) = 2\ell, \quad 
		\valp(a_2) = 2m, \\
		\valp(a_3) &= 2n, \quad
		\valp(a_4) = 2r + 1.
		\end{split}
	\]
	It is clear that under the above parametrisation the condition $p \nmid \bfa$ is equivalent to the minimum of $k, \ell, m, n$ being $0$.	We have $\left[\frac{-a_0 a_2}{3}\right] = \left[\frac{-a_0 a_3}{3}\right] = \left[\frac{-a_1 a_2}{3}\right] = \left[\frac{-a_1 a_3}{3}\right] = -1$ with probability $1/8$. The proportion of $a \in \ZZ_p$ with $\valp(a) = t$ is $p^{-t}(1 - 1/p)$. Thus we have
	\[
		\begin{split}
		\mu_p(S_1^{(iii)}) 
		&= \frac{1}{8}\(1 - \frac{1}{p}\)^5
		\sum_{r \ge 0} \frac{1}{p^{2r + 1}}
		\(\sum_{\substack{k, \ell, m, n \ge 0 \\ k + \ell = m + n}} \frac{1}{p^{2k + 2\ell + 2m + 2n}}
		- \sum_{\substack{k, \ell, m, n > 0 \\ k + \ell = m + n}} \frac{1}{p^{2k + 2\ell + 2m + 2n}}\) \\
		&= \frac{(p - 1)^2 (p^4 + 1)^2}{8 p^4 (p + 1)^3 (p^2 + 1)^2}.
		\end{split}
	\]

Applying the same circle of ideas to $S_2^{(iii)}$ one gets
		\[
		\begin{split}
		\mu_p(S_2^{(iii)}) 
		= \frac{(p - 1)^2 (p^4 + 1)}{8 p (p + 1)^3 (p^2 + 1)^3}.
		\end{split}
	\]
	Thus
	\[
		\mu_p(S^{(iii)})
		= \mu_p(S_1^{(iii)}) + \mu_p(S_2^{(iii)})
		= \frac{(p - 1)^2 (p^2 - p + 1) (p^4 + 1) (p^4 + p^3 + p^2 + p + 1)}{8 p^4 (p + 1)^3 (p^2 + 1)^3}.
	\]

A similar analysis shows that
	\begin{equation*}
	\begin{split}
		\mu_p(S^{(iv)})
		&= \frac{(p + 1) (4 p^4 + p^2 - 2) (p - 1)^6 + p^4 (p^8 + 6 p^6 + 4 p^4 + 1)}{4 p (p^2 - 1)^4 (p^2 + 1)^3}, \\
		\mu_p(S^{(v)}) 
		&= \frac{(p - 1) (p^8 + 4 p^6 + 7 p^4 + 5 p^2 + 3)}{2 p (p + 1)^4 (p^2 + 1)^3}.
	\end{split}
	\end{equation*}

	What is left is to take into account that we have the following expression
	\[
		\sigma_p
		=
		\begin{cases}
			1 - \mu_p(S^{(iii)}) - \mu_p(S^{(iv)}) - \mu_p(S^{(v)}), &\mbox{if } p = 3, \\
			1 - \mu_p(S^{(iv)}) - \mu_p(S^{(v)}, &\mbox{if } p > 3.
		\end{cases}
	\]
	Thus $\sigma_3 = 63693071/66355200$ and if $p > 3$ we get
	\[
		\begin{split}
			\sigma_p 
			= 1 - \frac{1}{2 p^2} + \frac{9}{4 p^3} + O\(\frac{1}{p^4}\).
		\end{split}
	\]
	This completes the proof of Proposition~\ref{prop:local densities}.
\end{proof}

\section{Proof of Theorem~\ref{thm:local solubility}} 
\label{sec:thm local}
We will now prove Theorem~\ref{thm:local solubility}. It suffices to apply \cite[Thm.~1.3]{BBL16} to the morphism $f : \sF \rightarrow \PP^4$ which projects each $X_\bfa$ to its coordinate vector $\bfa$. In order to do so we need that $\sF(\bfA_\QQ) \neq \emptyset$ and that the fibre of $f$ above each codimension one point of $\PP^4$ is split, i.e., it contains a geometrically integral open subscheme. The former is assured by our arrangements, while for the latter it is enough to check that the singular fibres of $f$ are split. The singular locus is determined by $a_0\cdots a_4(a_0a_1-a_2a_3)=0$.  We do a case by case analysis and deal with the following separately.
\begin{enumerate}[label=(\roman*)]
	\item $a_0=0$, 
	\item $a_4=0$,
	\item $a_0a_1-a_2a_3=0$.
\end{enumerate}
Notice that $a_i=0$, for $i=1,2,3$, is analogous to case (i) thanks to the symmetry in the equations defining $X_\bfa$.

For case (i) the fibre is defined via
\[
	\begin{split}
		x_0x_1-x_2x_3=0, \\
		a_1x_1^2 + \cdots + a_4x_4^2=0.
	\end{split}
\]
Consider the chart $x_1=1$. In it, $x_0$ is determined by $x_0=x_2x_3$ and thus the fibre is birational to a smooth quadric surface in $\mathbb{P}^3$ which is clearly split.
For cases (ii) and (iii), we use the conic bundle representation. The fibres have equations given by:
\[
	\begin{split}
		&\text{(ii):}\quad (a_0s^2+a_2t^2)x^2+(a_3s^2+a_1t^2)y^2=0, \\
		&\text{(iii):}\quad (a_0s^2+a_2t^2)(x^2+y^2)+ a_4z^2=0.
	\end{split}
\]
Notice that both are irreducible. Indeed, otherwise the generic fibres of each conic bundle would be reducible and hence singular. This is clearly not the case as both admit smooth fibres. Thus all conditions of \cite[Thm.~1.3]{BBL16} are fulfilled and Theorem~\ref{thm:local solubility} follows from it. We thank Dan Loughran for pointing out the above argument to us. 
\qed

\section{Surfaces with Brauer group of order 4}
\label{sec:br4}
This section is dedicated to the proof of the asymptotic formula for $N_4(B)$ appearing in Theorem~\ref{thm:Br}. Recall that $d = a_0a_1 - a_2a_3$. We first need to understand how often $d = 0$ under the assumptions $a_0a_1$, $-a_0a_2$, $a_2a_3 \in \QQ^{*2}$. We claim that
\begin{equation}
	\label{eqn:d=0}
	\#\left\{(a_0, \dots, a_3) \in \ZZ^4 \cap [-B, B]^4 \ : \ a_0a_1, -a_0a_2 \in \QQ^{*2} \mbox{ and } d = 0 
	\right\} 
	\ll B (\log B)^3.
\end{equation}
Indeed, for a quadruple $(a_0, \dots, a_3) \in \ZZ^4$ the conditions $a_0a_1, -a_0a_2, a_2a_3 \in \QQ^{*2}$ imply that $a_0, \dots, a_3$ must obey the following factorisation
\[
	a_0 = mk^2b_0^2, \quad
	a_1 = mk^2b_1^2, \quad
	a_2 = -m\ell^2b_2^2, \quad
	a_3 = -m\ell^2b_3^2,
\]
with $(k, \ell) = (b_0, b_1) = (b_2, b_3) = 1$. It suffices to consider the case where $k, \ell, m, b_0, b_1, b_2, b_3$ are all positive integers. 

The condition $a_0a_1 = a_2a_3$ becomes now $k^2b_0b_1 = \ell^2b_2b_3$ and since $(k, \ell) = 1$ we must have $k^2 \mid b_2b_3$ and $\ell^2 \mid b_0b_1$. Write $k = k_2k_3$ and $\ell = \ell_0\ell_1$ so that $b_0 = \ell_0^2c_0$, $b_1 = \ell_1^2c_1$, $b_2 = k_2^2c_2$ and $b_3 = k_3^2c_3$. Thus $c_0c_1 = c_2c_3$. Writing $c_0 = r_0s_0$ and $c_1 = r_1s_1$ now gives
\[
	a_0 = m k_2^2 k_3^2 \ell_0^4 r_0^2 s_0^2, \quad
	a_1 = m k_2^2 k_3^2 \ell_1^4 r_1^2 s_1^2, \quad
	a_2 = m \ell_0^2 \ell_1^2 k_2^4 r_0^2 r_1^2, \quad
	a_3 = m \ell_0^2 \ell_1^2 k_3^4 s_0^2 s_1^2,
\]
with $(k_2k_3, \ell_0\ell_1) = (\ell_0r_0d_0, \ell_1r_1d_1) = (k_2r_0r_1, k_3d_0d_1) = 1$. Forgetting the coprimality conditions and summing over $r_0, r_1$ first proves the claim. We continue now with the study of $N_4(B)$.

\begin{proposition}
	\label{prop:Br4}
	We have
	\[
		N_4(B) 
		= \frac{60}{\pi^2} B^3 
		+ O(B^{5/2}(\log B)^2).
	\]
\end{proposition}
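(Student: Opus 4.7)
The plan is to enumerate the primitive $\bfa$ contributing to $N_4(B)$ via the parametrization coming from the square conditions, followed by a standard Dirichlet series computation. By Proposition~\ref{prop:BrXconic} together with Remark~\ref{rem:Br4}, $N_4(B)$ counts the $\bfa \in \ZZ_{\text{prim}}^5$ with $|a_i| \le B$, $(a_0 a_1 - a_2 a_3)\prod_i a_i \neq 0$, $a_0 a_1, a_2 a_3, -a_0 a_2 \in \QQ^{*2}$ and $-a_0 a_4 d \notin \QQ^{*2}$ (local solubility being automatic). I would first invoke the factorization from the paragraph just above this proposition: every such $(a_0, \dots, a_3)$ decomposes uniquely as $a_0 = \epsilon m k^2 b_0^2$, $a_1 = \epsilon m k^2 b_1^2$, $a_2 = -\epsilon m \ell^2 b_2^2$, $a_3 = -\epsilon m \ell^2 b_3^2$ for some $\epsilon \in \{\pm 1\}$ and positive integers $m, k, \ell, b_0, b_1, b_2, b_3$ with $(k, \ell) = (b_0, b_1) = (b_2, b_3) = 1$. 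A direct calculation gives $\gcd(a_0, a_1, a_2, a_3) = m$, so the primitivity of $\bfa$ is equivalent to $\gcd(m, a_4) = 1$.

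Next I would dispose of the ``defect'' loci. The $\bfa$ with $d = 0$ contribute at most $O(B^2(\log B)^3)$ by combining \eqref{eqn:d=0} with the $O(B)$ choices for $a_4$. For the condition $-a_0 a_4 d \in \QQ^{*2}$: fixing $(a_0, \dots, a_3)$ with $d \neq 0$, the set of $a_4 \in [-B, B]$ making $-a_0 a_4 d$ a rational square consists of the values $\kappa n^2$ with $n \in \ZZ$ and $\kappa$ the squarefree part of $-a_0 d$, hence has size $O(\sqrt{B})$; summing over the $O(B^2)$ admissible $(a_0, \dots, a_3)$ yields $O(B^{5/2})$. Both contributions are absorbed in the claimed error.

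Finally, the main term is evaluated by standard M\"obius inversion. The number of coprime pairs $(b_0, b_1) \in \ZZ_{\ge 1}^2$ with $b_i \le Y$ is $Y^2/\zeta(2) + O(Y \log Y)$, the same estimate holds for $(b_2, b_3)$, and the number of $a_4 \in [-B, B] \setminus \{0\}$ coprime to $m$ is $2 B \phi(m)/m + O(2^{\omega(m)})$. Assembling these counts over $(\epsilon, m, k, \ell, b_i)$ subject to $m k^2 b_i^2 \le B$ and $m \ell^2 b_j^2 \le B$, together with the identities $\sum_{(k,\ell) = 1} 1/(k^2 \ell^2) = \zeta(2)^2/\zeta(4)$ and $\zeta(2)/\zeta(4) = 15/\pi^2$, produces the claimed main term $60 B^3/\pi^2$ after evaluation of the resulting sum in $m$. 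The principal technical burden is to verify that the M\"obius remainders and the truncations of the $m, k, \ell, b_i$ sums combine to at most $O(B^{5/2}(\log B)^2)$, which demands careful tracking of the logarithmic factors throughout.
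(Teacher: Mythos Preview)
Your overall strategy---parametrise the square conditions, discard the two defect loci $d=0$ and $-a_0a_4d\in\QQ^{*2}$, then evaluate the remaining sum via M\"obius over the coprime pairs $(b_0,b_1)$, $(b_2,b_3)$, $(k,\ell)$---is exactly the paper's approach. The paper also reduces by symmetry to a single sign configuration (picking up the factor $4$ you encode via $\epsilon$ and the sign of $a_4$), bounds the $d=0$ locus by \eqref{eqn:d=0}, and handles $-a_0a_4d\in\QQ^{*2}$ by the same squarefree-kernel argument you sketch.

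There is one substantive discrepancy, and it causes your computation to miss the stated constant. The paper's proof writes $N_4^{(i)}(B)=\#\{\bfa\in\ZZ^5:\ldots\}$ and never imposes $\gcd(\bfa)=1$; the sum over $a_4$ is simply $B+O(1)$, and the resulting $m$-sum is $\sum_m m^{-2}=\zeta(2)$, which combines with $\sum_{(k,\ell)=1}(k\ell)^{-2}=\zeta(2)^2/\zeta(4)$ and the $36/\pi^4$ from the two coprime-pair counts to give $15/\pi^2$, hence $60/\pi^2$ after multiplying by $4B$. You instead enforce primitivity via $\gcd(m,a_4)=1$ and weight the $a_4$-count by $\phi(m)/m$. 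Your identification $\gcd(a_0,\dots,a_3)=m$ is correct, but carrying the weight through replaces $\sum_m m^{-2}$ by $\sum_m \phi(m)/m^3=\zeta(2)/\zeta(3)$, and the leading constant becomes $60/(\pi^2\zeta(3))$, not $60/\pi^2$. So the step ``evaluation of the resulting sum in $m$'' does not produce what you claim: either follow the paper and drop the coprimality on $a_4$, or acknowledge that the constant you obtain differs from the one in the proposition.
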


\begin{proof}
	Proposition~\ref{prop:BrXconic} implies that the quotient $\Br X_{\bfa} / \Br \QQ$ is of order $4$ if and only if $a_0a_1$, $-a_0a_2$, $a_2a_3$ are all rational squares and $-a_0a_4d \notin \QQ^{*2}$. Moreover, if these conditions are met, then $(1:0:\sqrt{-a_0/a_2}:0:0) \in X_\bfa(\QQ)$ and thus we need not worry about local solubility. One easily verifies that $a_0, a_1$ need to have the same sign, and similarly for $a_2, a_3$ while $a_0$, $a_2$ need to have different signs. Thus we can distinguish between four cases depending on the sign of each $a_i$, these are
	\[
		\begin{array}{cc}
			\mbox{(i)} \ a_0, a_1, a_4 > 0, \ a_2, a_3 < 0, 
			&\mbox{(iii)} \ a_0, a_1 > 0, \ a_2, a_3, a_4 < 0, \\
			\mbox{(ii)} \  a_0, a_1, a_4 < 0, \ a_2, a_3 > 0,
			&\mbox{(iv)} \ a_0, a_1 < 0, a_2, \ a_3, a_4 > 0.
		\end{array}
	\]
	
	One checks that (i) and (ii) contribute to the same amount in $N_4(B)$ via the map $\bfa \mapsto -\bfa$ and so do (iii) and (iv). On the other hand, (i) and (iv) have equal contribution in $N_4(B)$ as seen by the map $(a_0, a_1, a_2, a_3, a_4) \mapsto (a_2, a_3, a_0, a_1, a_4)$. Thus if $N_4^{(i)}(B)$ is the contribution in $N_4(B)$ coming from (i), we then clearly have
	\begin{equation}
		\label{eq:N_4 N_4(i)}
		N_4(B) 
		= 4N_4^{(i)}(B).
	\end{equation}

	We continue with the study of $N_4^{(i)}(B)$. By definition it equals the following quantity
	\[
		\# \{ \bfa \in \ZZ^5 : 0 < a_0, a_1, -a_2, - a_3, a_4 \le B, a_0a_1, -a_0a_2, a_2a_3 \in \QQ^{*2} \mbox{ and } -a_0a_4d \notin \QQ^{*2}\}.
	\] 
	Let us first make the following change $(a_2, a_3) \mapsto (-a_2, -a_3)$ so that all of the coordinates of $\bfa$ are positive integers. Thus
	\[
		\begin{split}
			N_4^{(i)}(B)
			&= \sum_{\substack{a_0, \dots, a_4 \le B \\ a_0a_1, a_0a_2, a_2a_3 \in \QQ^{*2} \text{ and } -a_0a_4d \notin \QQ^{*2}}} 1 \\
			&= \sum_{\substack{a_0, \dots, a_4 \le B \\ a_0a_1, a_0a_2, a_2a_3 \in \QQ^{*2}}} 1
			- \sum_{\substack{a_0, \dots, a_4 \le B \\ -a_0a_4d, a_0a_1, a_0a_2, a_2a_3 \in \QQ^{*2}}} 1
			- \sum_{\substack{a_0, \dots, a_4 \le B \\ d = 0, a_0a_1, a_0a_2, a_2a_3 \in \QQ^{*2}}} 1.
		\end{split}
	\] 
	By \eqref{eqn:d=0} the last sum is $O(B^2(\log B)^3)$, the extra exponent of $B$ coming from the additional sum over $a_4$. We shall soon see that the second sum is also relatively small and thus the main contribution comes from the first sum. To do so we first write 
	\begin{equation}
		\label{eq:N_4(i)}
		N_4^{(i)}(B) 
		= M_4(B)
		\left( \sum_{a_4 \le B}1 - \sum_{\substack{a_4 \le B \\ -a_0a_4d \in \QQ^{*2}}}1 \right)
		+ O(B^2(\log B)^3),
	\end{equation}
	where
	\[
		M_4(B)
		=	\sum_{\substack{a_0, \dots, a_3 \le B \\ a_0a_1, a_0a_2, a_2a_3 \in \QQ^{*2}}} 1.
	\]

	We continue with the two sums over $a_4$ appearing above. Clearly, the first sum inside the brackets in \eqref{eq:N_4(i)} is $B + O(1)$. On the other hand, writing $a_4 = tu_4^2$ and $-a_0d = wu_5^2$ in the second sum with $t, w$ square-free shows that the the condition $-a_0a_4d \in \QQ^{*2}$ is fulfilled only if $t = w$. Hence one obtains
	\begin{equation}
		\label{eq:a_4 sum2}
		\sum_{\substack{a_4 \le B \\ -a_0a_4d \in \QQ^{*2}}} 1 
		\ll \sum_{u_4 \le \sqrt{B}} \sum_{\substack{t \le B/u_4^2 \\ t = w}} 1
		\ll B^{1/2}.
	\end{equation}

	As in the analysis of \eqref{eqn:d=0} the conditions $a_0a_1, a_0a_2, a_2a_3 \in \QQ^{*2}$ in $M_4(B)$ are detected by the factorisation
	\[
		a_0 = mk^2b_0^2, \quad
		a_1 = mk^2b_1^2, \quad
		a_2 = m\ell^2b_2^2, \quad
		a_3 = m\ell^2b_3^2,
	\]
	with $(k, \ell) = (b_0, b_1) = (b_2, b_3) = 1$. Thus
	\begin{equation}
		\label{eq:M_4}
		M_4(B) 
		= \sum_{m \le B}
		\sum_{\substack{k, \ell \le \sqrt{B/m}\\ (k, \ell) = 1}}
		\sum_{\substack{b_0, b_1 \le \sqrt{B/mk^2}\\ (b_0, b_1) = 1}}
		\sum_{\substack{b_2, b_3 \le \sqrt{B/m\ell^2}\\ (b_2, b_3) = 1}} 1.
	\end{equation}
	
	A standard argument in analytic number theory shows that for any real $X \ge 1$ the number of coprime $a, b \le X$ is $6X^2/\pi^2 + O(X \log X)$. We apply this  twice in \eqref{eq:M_4}, once for the sum over $b_2, b_3$ and once for the sum over $b_0, b_1$. This gives
	\[
		M_4(B)
		=\frac{36}{\pi^4}B^2 
		\sum_{m \le B} \frac{1}{m^2}
		\sum_{\substack{k, \ell \le \sqrt{B/m} \\ (k, \ell) = 1}} \frac{1}{k^2\ell^2}
		+ O(B^{3/2} (\log B)^2).
	\]
	All of the three sums appearing above are absolutely convergent. We first complete the sum over $\ell$, it equals $\zeta(2)\prod_{p \mid k}(1 - 1/p^2)$. The error in $M_4(B)$ coming from this completion is $O(B^{3/2})$. We do the same for the sum over $k$. The function inside the sum is multiplicative and hence this sum has an Euler product, it is $\prod_{p}(1 + 1/p^2)$. The error here is once again negligible compared to $B^{3/2} (\log B)^2$. Lastly, we complete the sum over $m$ to get
	\begin{equation}
		\label{eq:M_4 asymptotic}
		M_4(B)
		= \prod_p\(1 + \frac{1}{p^2}\) B^2
		+ O(B^{3/2} (\log B)^2).
	\end{equation}

	What is left is to combine \eqref{eq:N_4 N_4(i)}, \eqref{eq:N_4(i)}, \eqref{eq:a_4 sum2}, \eqref{eq:M_4 asymptotic} and to observe that the infinite product above equals $\zeta(2)/\zeta(4) = 15/\pi^2$. This gives
	\[
		N_4(B) 
		= \frac{60}{\pi^2} B^3
		+ O(B^{5/2}(\log B)^2),
	\]
	which completes the proof of Proposition~\ref{prop:Br4}.
\end{proof}

\section{Proof of Theorem~\ref{thm:Br}}
\label{sec:thm Br}
In order to prove Theorem~\ref{thm:Br} we first need to show that there are only a few surfaces in $\sF$ with trivial $\Br X_\bfa / \Br \QQ$. This is done in the next proposition.

\begin{proposition}
	\label{prop:N_1}
	We have
	\[
		N_1(B) \ll B^3 (\log B)^4.
	\]	 
\end{proposition}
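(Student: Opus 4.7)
The plan is to exploit Proposition~\ref{prop:BrXconic} to classify when $\Br X_{\bfa}/\Br\QQ$ is trivial, and then count the corresponding integer tuples. Using that $r\in\QQ(\sqrt{D})^{*2}$ is equivalent to $r\in\QQ^{*2}$ or $rD\in\QQ^{*2}$, one unpacks the condition $(*)$ of that proposition and sees that trivial Brauer group forces at least one of
\begin{itemize}
\item[(i)] one of $-a_0a_4d,\, a_2a_4d,\, -a_1a_4d,\, a_3a_4d$ to be a nonzero rational square, or
\item[(ii)] $-a_0a_2,\, -a_1a_3$ and $a_0a_1$ to all be rational squares.
\end{itemize}
Hence $N_1(B)$ is bounded by the sum of the five corresponding counts.

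Case (ii) is handled by the squarefree parametrisation $a_0=eu_0^2$, $a_1=eu_1^2$, $a_2=-eu_2^2$, $a_3=-eu_3^2$ with $e$ a positive squarefree integer: the number of $(a_0,a_1,a_2,a_3)\in[-B,B]^4$ is $\ll\sum_{e\le B}(B/e)^2\ll B^2$, and with $a_4$ free in $[-B,B]$ one obtains $O(B^3)$. For case (i), the symmetries $a_0\leftrightarrow a_1$ and $(a_0,a_1)\leftrightarrow(a_2,a_3)$ of $\sF$ (under which $d\mapsto\pm d$ and the four square conditions permute) reduce the task to bounding $M(B):=\#\{\bfa\in[-B,B]^5:-a_0a_4d\in\QQ^{*2}\}$. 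Parametrise $a_0=\epsilon_0 fgu_0^2$ and $a_4=\epsilon_4 fhu_4^2$, where $\epsilon_0,\epsilon_4\in\{\pm1\}$ and $f,g,h\ge 1$ are positive squarefree integers with $\gcd(g,h)=1$ and $\gcd(f,gh)=1$. Then the signed squarefree part of $-a_0a_4$ is $\pm gh$, and the condition $-a_0a_4d\in\QQ^{*2}$ translates to $d=\pm ghk^2$ for some $k\in\ZZ$.

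For each admissible tuple $(f,g,h,u_0,u_4,k)$ one counts $(a_1,a_2,a_3)\in[-B,B]^3$ solving $a_0a_1-a_2a_3=d$: for each $a_1$ the number of pairs $(a_2,a_3)$ with $a_2a_3=a_0a_1-d$ is $\ll\tau(|a_0a_1-d|)$, so summing over $a_1$ is a shifted divisor sum, and Dirichlet's hyperbola method -- combined with an averaging over $k$ to absorb an extra factor of $\tau(\gcd(a_0,d))$ -- gives this count as $\ll B\log B$. The ranges $fgu_0^2\le B$, $fhu_4^2\le B$, $ghk^2\le 2B^2$ contribute $\ll B^2/(fgh)$ choices for $(u_0,u_4,k)$. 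Consequently,
\[
M(B) \ \ll\ B^3\log B\sum_{\substack{f,g,h\ \text{sqfree}\\ fg,\,fh\le B}}\frac{1}{fgh}\ \ll\ B^3\log B\cdot(\log B)^3\ =\ B^3(\log B)^4,
\]
where the last estimate is obtained by performing the $g$- and $h$-sums first (each contributing $\log(B/f)$) and then using $\sum_{f\le B}(\log(B/f))^2/f\ll(\log B)^3$. Combining with case (ii) yields the claimed bound on $N_1(B)$.

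The main technical obstacle is the shifted divisor estimate: the hyperbola method unavoidably introduces a pointwise factor of $\tau(\gcd(a_0,d))$, which can be large for pathological $d$, and the key is to show that upon summing over $k$ (through which $d=Tk^2$ with $T=\pm gh$ varies) this factor averages to a bounded multiplicative constant. It is this refinement, which Browning's suggestion (acknowledged in the introduction) makes possible, that pins the exponent of $\log B$ at the optimal value $4$ rather than something larger.
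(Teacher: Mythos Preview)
Your approach is genuinely different from the paper's. After the same reduction to bounding the count of $\bfa$ with $-a_0a_4d\in\QQ^{*2}$ (your case~(ii) is in fact subsumed by case~(i), as the paper notes), the paper parametrises the squarefree parts of $a_0$, $a_4$ and $-d$ as $w_0=st$, $w_4=sw$, $w_5=tw$ with $stw$ squarefree, observes that $t\mid a_2a_3$, writes $a_2=t_2b_2$ and $a_3=t_3b_3$ with $t=t_2t_3$, and arrives at the \emph{linear} relation $b_2b_3-sv_0^2a_1-wv_5^2=0$. It then counts solutions via Heath--Brown's bound \cite[Lem.~3]{HB84} for primitive integer vectors on a hyperplane $\bfalpha\cdot\bfx=0$, combined with a full dyadic decomposition; this is the device credited to Browning. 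No shifted divisor sums appear.

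Your divisor-sum route has a genuine error in the averaging step. You claim that $\tau(\gcd(a_0,d))$ averages to a bounded constant over $k$, but since $\gcd(a_0,T)\mid\gcd(a_0,Tk^2)$ for every $k$, one has $\tau(\gcd(a_0,Tk^2))\ge\tau(\gcd(a_0,T))$ pointwise. With $a_0=\pm fgu_0^2$, $T=\pm gh$ and your coprimality constraints one computes $\gcd(a_0,T)=g\cdot\gcd(u_0,h)$, so the $k$-average is at least $\tau(g)$, which is not uniformly bounded. Carrying this factor through replaces the harmonic sum $\sum_{g\le B/f}1/g\ll\log(B/f)$ by $\sum_{g\le B/f}\tau(g)/g\asymp(\log(B/f))^2$, costing an extra logarithm and yielding only $B^3(\log B)^5$. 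Thus your sketch does not establish the stated exponent; recovering $(\log B)^4$ along these lines would require reorganising the sums so that the variable carrying the $\tau$-weight is summed first, and it is not clear this can be done without essentially reverting to the paper's linear-equation viewpoint.
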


\begin{proof}
	Recall that by Proposition~\ref{prop:BrXconic} we have $\Br X_\bfa / \Br \QQ$ trivial if and only if one of $-a_0a_4d$, $-a_1a_4d$, $a_2a_4d$, $a_3a_4d$ is a non-zero rational square. Let $N_1^{(0)}(B)$ be the number of those $X_\bfa \in \sF$ with $-a_0a_4d \in \QQ^{*2}$ and $|\bfa| \le B$. Define $N_1^{(1)}(B), N_1^{(2)}(B), N_1^{(3)}(B)$ in a similar fashion according the the conditions $-a_1a_4d$, $a_2a_4d$, $a_3a_4d \in \QQ^{*2}$. Forgetting the assumption on the existence of local points everywhere we clearly have
	\[
		N_1(B) \ll N_1^{(0)}(B) + N_1^{(1)}(B) + N_1^{(2)}(B) + N_1^{(3)}(B).
	\]  

	We shall now explain how to treat $N_1^{(0)}(B)$, the analysis of the other quantities being similar.  Letting
	\[
		a_0 = w_0v_0^2, \quad
		a_4 = w_4v_4^2, \quad
		-a_0a_1 + a_2a_3 = w_5v_5^2,
	\]
	with $w_0, w_4, w_5$ square-free allows us to see that $-a_0a_4d \in \QQ^{*2}$ is equivalent to $w_0w_4w_5$ being a non-zero rational square. Thus we must have $w_0 = st, w_4 = sw, w_5 = tw$ with $stw$ square-free. On the other hand, $-stv_0^2a_1 + a_2a_3 = twv_5^2$ implies that $t \mid a_2a_3$. Write $a_2 = t_2b_2$ and $a_3 = t_3b_3$, where $t = t_2t_3$ with $\mu^2(t_2t_3) = 1$. Thus
	\[
		b_2b_3 - a_1v_0^2s - v_5^2w = 0.
	\]

	Let $S, T_2, T_3, W, A_1, B_2, B_3, V_0, V_4, V_5 \gg 1$ run through the powers of 2. We shall also require them to satisfy the following conditions
	\[
		ST_2T_3V_0^2 \ll B, \quad
		A_1 \ll B, \quad
		T_2B_2 \ll B, \quad
		T_3B_3 \ll B, \quad
		SWV_4^2 \ll B
	\]
	and finally $WV_5^2 \ll B_2B_3 + A_1SV_0^2$. We break the the quantity $N_1^{(0)}(B)$ into sums $\Sigma = \Sigma(S, T_2, T_3, W, B_1, B_2, B_3, V_0, V_4, V_5)$ over the dyadic ranges $s \in (S/2, S]$, $t_2 \in (T_2/2, T_2]$ and so on. 

	What follows is an application of an upper bound of Heath-Brown \cite[Lem.~3]{HB84} which states that if $\bfalpha = (\alpha_1, \alpha_2, \alpha_3)$ is a integer vector with coprime coordinates, then
	\[
		\#\{\bfx \in \ZZ_{\text{prim}}^3 : |x_i| \le X_i, i = 1,2,3 \mbox{ and } \bfalpha \cdot \bfx = 0\}
		\ll 1 + \frac{X_1X_2X_3}{\max|\alpha_iX_i|}.
	\]
	We choose $\bfx = (a_1, b_2, w)$ and let $h = \gcd(a_1, b_2, w)$. Thus $X_1 = A_1$, $X_2 = B_2$, $X_3 = W$, $\alpha_1 = -sv_0^2$, $\alpha_2 = b_3$ and $\alpha_3 = v_5^2$. Let $\bfx' = \bfx/h$ and $\bfalpha' = \bfalpha/\gcd(sv_0^2, b_2, v_5^2)$. Applying Heath-Brown's bound for the number of $\bfx'$ with coordinates at most $X_i/h$ and satisfying $\bfx'\cdot \bfalpha' = 0$ now gives
	\[
			\#\{\bfx \in \ZZ^3 : |x_i| \le X_i \mbox{ and } \bfalpha \cdot \bfx = 0\}
			\ll \sum_{h \le \min\{A_1, B_2, W\}} \( 1 
			+ \frac{X_1X_2X_3 \gcd(sv_0^2, b_3, v_5^2)}{h^2\max|\alpha_i X_i|} \).
	\]
	
	It is clear that when we open the brackets the first sum is $O(W)$. On the other hand, the second sum over $h$ is convergent and the error coming from its tail is negligible. Thus summing over the remaining variables in the first sum and replacing $\max|\alpha_iX_i|$ by $(\alpha_1X_1 \alpha_2X_2 \alpha_3X_3)^{1/3}$ in the second sum gives
	\[
		\Sigma \ll ST_2T_3WB_3V_0V_4V_5 + T_2T_3(WA_1B_2)^{2/3}V_4\sum_{s, b_3, v_0, v_4} \frac{\gcd(sv_0^2, b_3, v_5^2)}{(sb_3v_0^2v_5^2)^{1/3}}.
	\]
	Running the same argument with $\bfx = (a_1, b_3, w)$ and then using  the elementary fact that $\min\{B_2, B_3\} \ll (B_2B_3)^{1/2}$ now gives
	\[
		\begin{split}
			\Sigma 
			\ll &ST_2T_3W(B_2B_3)^{1/2}V_0V_4V_5 
			+ T_2T_3(WA_1B_3)^{2/3}V_4\sum_{s, b_2, v_0, v_4} \frac{\gcd(sv_0^2, b_2, v_5^2)}{(sb_2v_0v_5)^{1/3}} \\
			&+ T_2T_3(WA_1B_2)^{2/3}V_4\sum_{s, b_3, v_0, v_4} \frac{\gcd(sv_0^2, b_3, v_5^2)}{(sb_3v_0v_5)^{1/3}}.
		\end{split}
	\]
	
	Our analysis so far showed that
	\[
		\begin{split}
			N_1^{(0)}(B)
			\ll &\sum_{S, T_2, T_3, W, A_1, B_2, B_3, V_0, V_4, V_5} ST_2T_3W(B_2B_3)^{1/2}V_0V_4V_5 \\
			&+ \sum_{S, T_2, T_3, W, A_1, B_2, B_3, V_0, V_4, V_5} T_2T_3(WA_1B_3)^{2/3}V_4
			\sum_{s, b_2, v_0, v_4} \frac{\gcd(sv_0^2, b_2, v_5^2)}{(sb_2v_0v_5)^{1/3}} \\
			&+ \sum_{S, T_2, T_3, W, A_1, B_2, B_3, V_0, V_4, V_5} T_2T_3(WA_1B_2)^{2/3}V_4
			\sum_{s, b_3, v_0, v_4} \frac{\gcd(sv_0^2, b_3, v_5^2)}{(sb_3v_0v_5)^{1/3}}.
		\end{split}
	\]
	Let $S_1$, $S_2$ and $S_3$ denote the first, the second and the third sum above, respectively.

	We claim that 
	\[	
		S_1  
		\ll B^3(\log B)^3.
	\]
	Indeed, recall the simple bound
	\begin{equation}
		\label{eqn:geometric}
		\sum_{A = 2^i \le X} A^{\theta}
		\ll
		\begin{cases}
			X^{\theta} &\mbox{if } \theta > 0, \\
			\log X &\mbox{if } \theta = 0, \\
			1 &\mbox{if } \theta < 0.
		\end{cases}
	\end{equation}
	With the arrangements made above we have guaranteed that
	\[
		U_0 \ll \(\frac{B}{ST_2T_3}\)^{1/2}, \quad
		U_4 \ll \(\frac{B}{SW}\)^{1/2}, \quad
		U_5 \ll \frac{B}{(T_2T_3W)^{1/2}}.
	\]
	Thus applying \eqref{eqn:geometric} to the sums over $B_2, B_3, U_0, U_4, U_5$ gives 
	\[
		S_1
		\ll B^3 \sum_{S, T_2, T_3, W, A_1}\frac{1}{(T_2T_3)^{1/2}}.
	\]
	We then apply \eqref{eqn:geometric} to the sums over the remaining variables to get
	\[ 
		S_1
		\ll B^3 (\log B)^3,
	\]
	which proves the claim.

	Finally, we claim that
	\[	
		S_2
		\ll B^3(\log B)^4, \quad
		S_3
		\ll B^3(\log B)^4.
	\]
	We will prove the above bound for $S_2$, the analysis for $S_3$ being similar. We begin by studying of the sum over $s, b_2, v_0, v_4$ appearing in $S_2$. Write
	\[
		s = ks', \quad
		b_2 = kmn^2c_2, \quad
		v_0 = mnu_0, \quad
		v_5 = kmnu_5.
	\]
	Then $\gcd(sv_0^2, b_2, v_5^2) = kmn^2$ and thus
	\[
		\sum_{s, b_2, v_0, v_4} \frac{\gcd(sv_0^2, b_2, v_5^2)}{(sb_2v_0v_5)^{1/3}}
		\ll \sum_{k, m, n, s', c_2, u_0, u_5} \frac{1}{(km^2s'c_2u_0^2u_5^2)^{1/3}},
	\]
	where the sum is over $ks' \ll S$, $kmn^2c_2 \ll B_2$, $mnu_0 \ll V_0$ and $kmnu_5 \ll V_5$. Summing over $s', u_0, c_2$ and $u_5$ then gives
	\[
		\sum_{s, b_2, v_0, v_4} \frac{\gcd(sv_0^2, b_2, v_5^2)}{(sb_2v_0v_5)^{1/3}}
		\ll (SB_2)^{2/3}(V_0V_5)^{1/3} \sum_{k, m, n} \frac{1}{(kmn)^2}.
	\]
	The sums over $k, m, n$ are convergent and their tails after completion have only negligible contribution. Thus
	\[
		S_2 
		\ll \sum_{S, T_2, T_3, W, A_1, B_2, B_3, V_0, V_4, V_5} S^{2/3}T_2T_3W^{2/3}(A_1B_2B_3)^{2/3}(V_0V_5)^{1/3}V_4.
	\]
	Proceeding in a similar fashion as in the study of $S_1$ now gives the claim which completes the proof.
\end{proof}

We continue with a lower bound for the quantity analysed in the previous proposition.

\begin{proposition}
	\label{prop:N_1_lower}
	We have
	\[
		N_1(B) \gg B^3.
	\]	 
\end{proposition}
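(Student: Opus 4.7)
My plan is to exhibit an explicit sub-family of $\sF$ of size $\gg B^3$ consisting of everywhere locally soluble surfaces with trivial Brauer group modulo $\Br \QQ$. Specifically, I would restrict to $\bfa$ of the shape $(a_0, a_1, a_2, a_3, -a_0)$ subject to $a_0 a_1 - a_2 a_3 = w^2$ for some non-zero integer $w$. For any such $\bfa$, the point $(1:0:0:0:1)$ lies in $X_\bfa(\QQ)$ since $1 \cdot 0 - 0 \cdot 0 = 0$ and $a_0 + a_4 = a_0 + (-a_0) = 0$; in particular $X_\bfa(\bfA_\QQ) \neq \emptyset$.

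For the Brauer group I would apply Proposition~\ref{prop:BrXconic}: the choices force $-a_0 a_4 d = a_0^2 w^2$ to be a non-zero rational square, which simultaneously rules out the first case of the proposition (whose hypothesis includes $-a_0 a_4 d \notin \QQ^{*2}$) and makes the condition $(*)$ fail, since a rational square is necessarily a square in any quadratic extension, in particular in $\QQ(\sqrt{-a_0 a_2})$. Hence $\Br X_\bfa / \Br \QQ = \{\id\}$ throughout the sub-family.

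The remaining task is to lower bound the number of primitive $\bfa$ of this shape with $|\bfa| \le B$ for which $X_\bfa$ is smooth. Since $a_4 = -a_0$, primitivity of $\bfa$ is equivalent to primitivity of the quadruple $(a_0, a_1, a_2, a_3)$; and the relation $a_0 a_1 - a_2 a_3 = w^2$ forces any common divisor of $a_0, a_1, a_2, a_3$ to divide $w$, so primitivity is further equivalent to that of the $5$-tuple $(a_0, a_1, a_2, a_3, w)$. I would therefore count primitive integer solutions to $a_0 a_1 - a_2 a_3 = w^2$ with $|a_i| \le B$ for $i = 0, 1, 2, 3$ and all coordinates non-zero. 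For a lower bound one fixes $a_0 \in (B/2, B]$, parametrises triples $(a_2, a_3, w) \in [-B/10, B/10]^3$ subject to $a_0 \mid a_2 a_3 + w^2$, and sets $a_1 := (a_2 a_3 + w^2)/a_0$, which automatically lies in $[-B, B]$. A standard density computation for the quadratic congruence yields $\gg B^3 / a_0$ admissible triples per $a_0$, summing to $\gg B^3$ once $a_0$ runs over $(B/2, B]$.

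The main obstacle is making the density estimate rigorous with uniformity in $a_0$: one needs to count $(a_2, a_3, w)$ in a box of side $\asymp B$ satisfying $a_2 a_3 + w^2 \equiv 0 \pmod{a_0}$, to perform M\"obius inversion for primitivity, and to verify that the loci where some $a_i$ or $w$ vanishes contribute only a negligible amount. This can be carried out either by elementary character sum bookkeeping, or much more cleanly by invoking Heath-Brown's asymptotic count for the zeros of an indefinite non-degenerate quadratic form in five variables applied to $a_0 a_1 - a_2 a_3 - w^2 = 0$, which supplies a matching asymptotic of order $B^3$ and subsumes all the subsidiary estimates.
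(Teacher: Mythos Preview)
Your proposal is correct and follows essentially the same route as the paper: both restrict to the subfamily with $a_4 = -a_0$ and $d = w^2$, note the rational point $(1:0:0:0:1)$, and deduce triviality of $\Br X_\bfa/\Br\QQ$ from $-a_0a_4d = (a_0w)^2 \in \QQ^{*2}$ via Proposition~\ref{prop:BrXconic}. The only divergence is in the final count of integer solutions to $a_0a_1 - a_2a_3 = w^2$ in a box of side $B$. The paper sums over $w$ and, for each fixed $w$, invokes the Duke--Rudnick--Sarnak count of $2\times 2$ integer matrices of determinant $w^2$ and Euclidean norm $\le B$, obtaining $\gg B^2\sum_{d\mid w^2}1/d$ and hence $\gg B^3$ after summing over $w\le B/2$. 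You instead propose Heath-Brown's circle-method asymptotic for indefinite non-degenerate quadratic forms in five variables, which delivers the required $\asymp B^3$ in one stroke and is an entirely legitimate substitute. Your alternative elementary sketch (fix $a_0\in(B/2,B]$ and count $(a_2,a_3,w)\in[-B/10,B/10]^3$ satisfying a congruence mod $a_0$) is more delicate than you suggest, because the box side $B/5$ is smaller than the modulus $a_0$, so the density heuristic is not automatic and genuine exponential-sum input would be needed; you correctly flag this as the main obstacle, and the Heath-Brown route sidesteps it.
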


\begin{proof}
	To show this we shall count the number of $\bfa \in \ZZ_{\text{prim}}^5$ coming from the following subfamily
	\[
		\sF' 
		= \{X_\bfa \in \sF \ : \ a_0a_1 - a_2a_3 \in \QQ^{*2} \mbox{ and } a_4 = -a_0\}.
	\]  
	Clearly, $(1: 0: 0: 0: 1) \in X_\bfa(\QQ)$ for each $X_\bfa \in \sF'$ and thus we need not worry about the existence of local points. Moreover, Proposition~\ref{prop:BrXconic} implies that all varieties in $\sF'$ have a trivial Brauer group and thus
	\[
		N_1(B) \gg \# \{X_\bfa \in \sF' \ : \ |\bfa| \le B\}.
	\]

	Let $N_1'(B)$ denote the cardinality of the set on the right hand side above. We then have
	\[
		\begin{split}
			N_1'(B)
			= \sum_{k \le B\sqrt{2} \ }
			\sum_{\substack{|a_0|, |a_1|, |a_2|, |a_3| \le B \\ a_0a_1 - a_2a_3 = k^2}} 1 
			\gg \sum_{k \le B/2 \ }
			\sum_{\substack{a_0^2 + a_1^2 + a_2^2 + a_3^2 \le B^2 \\ a_0a_1 - a_2a_3 = k^2}} 1. 
		\end{split}
	\]
	The inner sum in the far right hand side corresponds to the number of matrices in $M_{2}(\ZZ)$ with non-zero entries of height at most $B$ with respect to the standard Euclidean norm whose discriminant is equal to $k^2$. We shall apply \cite[Ex.~1.6]{DRS93} to this sum. Since in \cite[Ex.~1.6]{DRS93} quadruples with $a_i = 0$ are allowed we must first guarantee that the contribution from these $\bfa$ is negligible. Indeed, let $a_0 = 0$. Then $a_1$ can be chosen almost arbitrarily in the region $a_1^2 + a_2^2 + a_3^2 \le B^2$ and the choice of $a_2$ uniquely determined $a_3$ for each fixed $k$. Thus  
	\[
		\sum_{k \le B/2 \ } \sum_{\substack{a_1^2 + a_2^2 + a_3^2 \le B^2 \\ - a_2a_3 = k^2}} 1
		\ll B \sum_{k \le B/2} \tau(k^2) \ll B^{2 + \varepsilon},
	\]
	where $\varepsilon$ is arbitrarily small positive number and $\tau(k^2)$ denotes the number of divisors of $k^2$.	Applying \cite[Ex.~1.6]{DRS93} and taking the above into account now gives
	\[
		N_1'(B) 
		\gg B^2 \sum_{k \le B/2 \ } \sum_{d \mid k^2} \frac{1}{d}
		\gg B^3.
	\]
	This completes the proof.
\end{proof}

We are now in position to prove Theorem~\ref{thm:Br}. Recall that there are only three possibilities for $\Br X_\bfa/\Br \QQ$. With the notation set up earlier in the introduction we then have
\[
	N_2(B) 
	= \#\left\{X_\bfa \in \sF \ : \ |\bfa| \le B \mbox{ and } X_\bfa(\bfA_\QQ) \neq \emptyset \right\} - N_1(B) - N_4(B).
\]
On one hand, $N_4(B) = O(B^3)$ by Proposition~\ref{prop:Br4} and $B^3 \ll N_1(B) \ll B^3(\log B)^4$ by Propositions~\ref{prop:N_1} and \ref{prop:N_1_lower}. On the other hand, the remaining quantity above was studied in Theorem~\ref{thm:local solubility}. This proves Theorem~\ref{thm:Br}.
\qed

\section{Proof of Theorems~\ref{thm:BMO} and \ref{thm:WA}}
\label{sec:bmo}
We begin with the following simple lemma. 

\begin{lemma}
	\label{lem:surjectivity}
	Let $p > 7$ be a prime and let $a, b, c \in \Fp^*$. Then there exist $u_1, u_2 \in \Fp$ such that $u_1^2 + b \in \Fp^{*2}$ and $u_2^2 + b \in \Fp^* \setminus \Fp^{*2}$. Moreover, $u_1^2 + c$ and $u_2^2 + c$ are both units in $\Fp$ and $a(u_1^2 + b)(u_1^2 + c), a(u_2^2 + b)(u_2^2 + c) \in \Fp^{*2}$.
\end{lemma}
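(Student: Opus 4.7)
The plan is to rephrase the conclusion in terms of Legendre symbols and then count solutions via a standard character sum argument. Let $\chi$ denote the Legendre symbol mod $p$. When $u^2+b$ and $u^2+c$ are non-zero, the identity $\chi(a(u^2+b)(u^2+c)) = \chi(a)\chi(u^2+b)\chi(u^2+c)$ shows that, in combination with $\chi(u_1^2+b)=1$, the square condition on $a(u_1^2+b)(u_1^2+c)$ is equivalent to $\chi(u_1^2+c)=\chi(a)$. Similarly the $u_2$ conditions become $\chi(u_2^2+b)=-1$ and $\chi(u_2^2+c)=-\chi(a)$. Hence it suffices to show that for each of the two sign pairs $(\epsilon_1,\epsilon_2)=(1,\chi(a))$ and $(-1,-\chi(a))$, the set
\[
	S(\epsilon_1,\epsilon_2) = \bigl\{u \in \Fp : \chi(u^2+b)=\epsilon_1,\ \chi(u^2+c)=\epsilon_2\bigr\}
\]
is non-empty.

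I would next apply the indicator expansion $\mathbf{1}_{\chi(x)=\epsilon}=(1+\epsilon\chi(x))/2$ for $x \neq 0$ and expand the resulting product to obtain
\[
	4\,|S(\epsilon_1,\epsilon_2)| = p + \epsilon_1 T_b + \epsilon_2 T_c + \epsilon_1\epsilon_2 T_{bc} + O(1),
\]
with $T_b = \sum_{u \in \Fp}\chi(u^2+b)$, $T_c = \sum_{u}\chi(u^2+c)$ and $T_{bc} = \sum_{u}\chi((u^2+b)(u^2+c))$. The classical Gauss-sum evaluation gives $T_b = T_c = -1$. For $T_{bc}$, assuming $b \neq c$, the substitution $v=u^2$ reduces the sum to a combination of $\sum_v \chi((v+b)(v+c)) = -1$ and $\sum_v \chi(v(v+b)(v+c))$, the latter being bounded by the Hasse bound for the elliptic curve $y^2 = v(v+b)(v+c)$, giving $|T_{bc}| \le 2\sqrt{p} + O(1)$.

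Combining everything yields $|S(\epsilon_1,\epsilon_2)| \ge (p - 2\sqrt{p})/4 - O(1)$, which is strictly positive once $p$ exceeds a small explicit threshold; I would then verify by direct computation that this threshold can be taken to be $p > 7$, applying the estimate with the two required sign pairs to produce $u_1$ and $u_2$. The main obstacle I expect is twofold. First, one has to track carefully the small error terms---the at most four $u$ with $u^2+b=0$ or $u^2+c=0$, and the two points at infinity on the compactified genus-one curve---in order to justify the threshold $p>7$ exactly. Second, the degenerate case $b=c$ must be addressed: here $T_{bc}$ loses square-root cancellation and $S(\epsilon_1,\epsilon_2)$ is empty unless $\epsilon_1=\epsilon_2$, so the lemma either implicitly assumes $b\neq c$ or must be applied in a context where this is automatic, and the degenerate case handled separately via a direct count using $T_b = -1$.
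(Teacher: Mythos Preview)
Your character-sum approach is correct and takes a different route from the paper. The paper proceeds more elementarily, counting $\Fp$-points on the rational conic $v^2=u^2+bt^2$ (which has exactly $p+1$ projective points) to obtain $p-2-\bigl(\tfrac{-b}{p}\bigr)$ affine pairs $(u,v)$ with $v^2=u^2+b\ne 0$, and similarly for the non-square side; it then asserts that once $p>7$ both counts exceed $(p+1)/2$, which is claimed to suffice for all the conditions of the statement, though the passage from this to the constraints involving $c$ is not spelled out. Your argument has the advantage of treating the $b$- and $c$-constraints simultaneously via the mixed sum $T_{bc}$, making the role of the Weil bound transparent; the price is that the constants coming out of Hasse give a threshold nearer $p\ge 23$, so the direct check you propose at $p\in\{11,13,17,19\}$ is genuinely needed. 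Your concern about $b=c$ is also justified: in that case $a(u^2+b)(u^2+c)=a(u^2+b)^2$ lies in $\Fp^{*2}$ if and only if $a$ does, so the lemma as stated fails when $\bigl(\tfrac{a}{p}\bigr)=-1$. In the paper's only application one has $b=a_2/a_0$ and $c=a_1/a_3$ with $p\nmid d=a_0a_1-a_2a_3$, which forces $b\ne c$ in $\Fp$, so the degenerate case never arises there.
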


\begin{proof}
	Consider the projective curve $C$ defined over $\Fp$ by $v^2 = u^2 + bt^2$. This is a smooth quadric with an $\Fp$-rational point and thus it is isomorphic to $\PP^1$. Hence $\#C(\Fp) = p + 1$. If $-b \not\in \Fp^{*2}$, then all $\Fp$-points on $C$ satisfy $v \neq 0$. Alternatively, if $-b \in \Fp^{*2}$ there are two points in $C(\Fp)$ with $v = 0$. Finally, the divisor given by $t = 0$ consists of two $\Fp$-points. We conclude that
	\[
		\#\{(u, v) \in \Fp^2 \ : \ v^2 = u^2 + b \neq 0 \} = p - 2 - \(\frac{-b}{p}\).
	\]
	In particular, there exists $u_1 \in \Fp$ such that $u_1^2 + b \in \Fp^{*2}$ when the above quantity is positive. 

	To see the existence of $u_2$ as in the statement we apply a similar argument to show that
	\[
		\#\{(u, v) \in \Fp^2 \ : \  u^2 + b \neq 0 \} = p^2 - \(1 + \(\frac{-b}{p}\)\)p.
	\]
	Thus we have
	\[
		\#\(\{(u, v) \in \Fp^2 \ : \ u^2 + b \neq 0 \} 
		\setminus \{(u, v) \in \Fp^2 \ : \ v^2 = u^2 + b \neq 0 \}\)
		> p^2 - 3p + 1.
	\]
	It is clear that if $p > 3$, then both $p - 2 - \(\frac{-b}{p}\)$ and $p^2 - 3p + 1$ are positive. Moreover, if $p > 7$, then both quantities are at least $(p + 1)/2$ and thus we can choose $u_1$ and $u_2$ so that the assumptions of the statement are satisfied. This completes the proof of Lemma~\ref{lem:surjectivity}.
\end{proof}

Recall that the Brauer--Manin obstruction is known to be the only obstruction to the Hasse principle and weak approximation for $X_\bfa \in \sF$ since such surfaces are conic bundles with four degenerate geometric fibres \cite[Thm~2]{CT90}, \cite{Sal86}. A Brauer--Manin obstruction to the existence of rational points on $X_\bfa$ is present only if $\Br X_\bfa / \Br \QQ \simeq \ZZ/2\ZZ$ by Remark~\ref{rem:Br4}. Moreover, we can assume that $\Br X_\bfa / \Br \QQ \simeq \ZZ/2\ZZ$ since by Theorem~\ref{thm:Br} the number of surfaces with $\Br X_\bfa / \Br \QQ \not\simeq \ZZ/2\ZZ$ is negligible compared to the bounds we need to show in order to prove Theorems~\ref{thm:BMO} and \ref{thm:WA}. In this case, $\Br X_\bfa / \Br \QQ$ is generated by
\[
 	(a_0(x_0/x_2)^2 + a_2, -a_0a_4d)
 	=	((x_0/x_2)^2 + a_2/a_0, -a_0a_4d) \in \Br X_\bfa / \Br \QQ.
\]
Let $\alpha = ((x_0/x_2)^2 + a_2/a_0, -a_0a_4d)$ as an element of $\Br X_\bfa$. 

For a detailed background on the Brauer--Manin obstruction we refer the reader to \cite[Ch.~12]{CTS19} or \cite[Ch.~8]{P17}. What is important for us is that the Brauer--Manin set $X_\bfa(\bfA_\QQ)^{\Br X_\bfa}$ is determined by the adelic points on $X_\bfa$ for which the sum of local invariant maps $\sum_p \inv_p(\ev_\alpha(\bfx_p))$ vanish. Since the value of $\inv_p(\ev_\alpha(\bfx_p))$ is either $0$ or $1/2$, if there is a prime $p$ for which this map surjects on $\{0, 1/2\}$ we can modify the ad\`ele $(\bfx_p)$ at $p$ to get a point inside $X_\bfa(\bfA_\QQ)^{\Br X_\bfa}$. This shows that there is no Brauer--Manin obstruction to the Hasse principle in this case. It implies further that $X_\bfa(\bfA_\QQ)^{\Br X_\bfa}$ is a strict subset of $X_\bfa(\bfA_\QQ)$ and thus weak approximation fails.

Assume now that there is a prime $p > 7$ dividing $a_4$ to an odd power and such that $p$ does not divide $a_0a_1a_2a_3d$. On one hand, the invariant map $\inv_p(\ev_\alpha(\bfx_p))$ depends only on the Hilbert symbol $((x_0/x_2)^2 + a_2/a_0, -a_0a_4d)_p$ as seen by the following formula
\[
	\inv_p(\ev_\alpha(\bfx_p))
	= \frac{1 - ((x_0/x_2)^2 + a_2/a_0, -a_0a_4d)_p}{4}.
\]
In view of \eqref{eqn:conic bundle} Lemma~\ref{lem:surjectivity} applied with $u = x_0/x_2$, $a = a_0a_3$, $b = a_2/a_0$, $c = a_1/a_3$ and Hensel's lemma there are $\bfx_p', \bfx_p'' \in X_\bfa(\Qp)$ for which the Hilbert symbol takes both values $-1, 1$. Therefore, in order to obtain the upper bounds in Theorems~\ref{thm:BMO} and \ref{thm:WA} it suffices to count the number of $\bfa \in \ZZ_{\text{prim}}^5$ of height at most $B$ failing the above assumption.

It is clear that we only need to deal with divisibility conditions on the coordinates of $\bfa$. Since the sign of each $a_i$ is irrelevant to such conditions and we aim to prove only upper bounds we can freely assume that $a_i > 0$ for $i = 0, \dots, 4$. Let $k = \gcd(a_4, a_0)$, $\ell = \gcd(a_4/k, a_1)$, $m = \gcd(a_4/k\ell, a_2)$ and $n = \gcd(a_4/k\ell m, a_3)$. Then write
\begin{equation}
	\label{eqn:bfa N4}
	a_0 = k b_0, \quad
	a_1 = \ell b_1, \quad 
	a_2 = m b_2, \quad
	a_3 =	n b_3, \quad
	a_4 = 2^{v_2}3^{v_3}5^{v_5}7^{v_7} k \ell m n  r s b_4^2,
\end{equation}
with $v_p = \valp (a_4)$ for $p = 2, 3, 5, 7$. We have used $rs$ here for the square-free part of $a_4/2^{v_2}3^{v_3}5^{v_5}7^{v_7}k\ell mn$, where $r$ divides $k \ell mn$ while $s$ is coprime to $k \ell mn$. Our analysis above shows that if there is a Brauer--Manin obstruction to the Hasse principle, then for every prime $p \mid s$ we must have $k\ell b_0b_1 - mnb_2b_3 \equiv 0 \bmod p$. Since $s$ is square-free this condition is equivalent to  $k\ell b_0b_1 - mnb_2b_3  \equiv 0 \bmod s$. Therefore, the number of surfaces $X_\bfa \in \sF$ with a Brauer--Manin obstruction and such that $|\bfa| \le B$ is bounded from above by
\[
	 \#\{\bfa \in \ZZ_{\text{prim}}^5 : |a_i| \le B, \bfa \mbox{ satisfies } \eqref{eqn:bfa N4} \mbox{ and } k\ell b_0b_1 - mnb_2b_3 \equiv 0 \bmod s\}.
\] 

Let $N(B)$ denote this quantity. Forgetting the coprimality conditions except those between $s$ and $b_0, b_1, b_2, b_3, k, \ell, m , n$ now gives
\begin{equation}
	\label{eqn:N4 upper}
	N(B)
	\ll \sum_{\substack{k, \ell, m, n, r, s, v_i, b_0, b_1, b_2, b_4 \\ (s, b_0b_1b_2) = 1}} \mu^2(rs) 
	\sum_{\substack{b_3 \le B/n \\ b_3 \equiv k \ell b_0b_1(mnb_2)^{-1} \bmod s}} 1,
\end{equation}
where the summation is taken over $k b_0,  \ell b_1, m b_2, n b_3,  2^{v_2}3^{v_3}5^{v_5}7^{v_7} k \ell m n r s b_4^2 \le B$. We treat the inner sum in a standard way by splitting the interval $[1, B/n]$ into intervals of length $s$, which gives $\sum_{b_3} 1 = B/ns + O(1)$. Applying this in \eqref{eqn:N4 upper}, forgetting the remaining coprimality conditions and the squarefreeness of $rs$ and then summing over $b_4$ shows that 
\[
	N(B)
	\ll \sum_{k, \ell, m, n, r, s, v_i, b_0, b_1, b_2}
	\(\(\frac{B}{2^{v_2}3^{v_3}5^{v_5}7^{v_7}k\ell m n r s}\)^{1/2} + O(1)\)
	\(\frac{B}{n s} + O(1)\).
\]
We bound the remainder terms trivially to obtain
\[
	N(B)
	\ll B^{3/2}\sum_{k, \ell, m, n, r, s, v_i, b_0, b_1, b_2}
	\frac{1}{(2^{v_2}3^{v_3}5^{v_5}7^{v_7} k \ell m n^3 r s^3)^{1/2}}
	+ O(B^{4 + \varepsilon}),
\]
with $\varepsilon$ arbitrarily small.

The above sum over $s$ is clearly convergent. Thus summing over $s, b_0, b_1, b_2$ gives
\[
	\begin{split}
		N(B)\newcommand{\reqnomode}{\tagsleft@false}	
		&\ll B^{3/2}\sum_{k, \ell, m, n, r, v_i}
		\frac{1}{(2^{v_2}3^{v_3}5^{v_5}7^{v_7} k \ell m n^{3}r)^{1/2}}
		\(\frac{B}{k} + O(1)\)\(\frac{B}{\ell} + O(1)\)\(\frac{B}{m} + O(1)\) \\
		&+ O(B^{4 + \varepsilon}).
	\end{split}
\]
Since $r \mid k \ell m n$ it is cleat that $N(B) \ll B^{9/2}$. On the other hand, the quantities  appearing in Theorems~\ref{thm:BMO} and \ref{thm:WA} are $O(N(B))$. This proves Theorems~\ref{thm:BMO} and \ref{thm:WA}.
\qed

\bibliographystyle{amsalpha}{}
\bibliography{bibliography/references}
\end{document}